\newtheorem{theorem}[subsection]{Theorem}
\newtheorem{lemma}[subsection]{Lemma}
\newtheorem{proposition}[subsection]{Proposition}
\newtheorem{corollary}[subsection]{Corollary}
\newtheorem*{claim*}{Claim}
\newtheorem*{theorem*}{Theorem}
\def\bal{\begin{aligned}}
\def\eal{\end{aligned}}
\def\be{\begin{equation}\label}
\def\ee{\end{equation}}
\def\bcs{\begin{cases}}
\def\ecs{\end{cases}}
\def\={\;=\;}
\def\+{\,+\,}
\def\-{\,-\,}
\def\Z{{\mathbb Z}}
\def\R{{\mathbb R}}
\def\ord{\mathrm{ord}}
\def\scrO{\mathscr{O}}
\def\scrY{\mathscr{Y}}
\def\cartier{\mathscr{C}_p}
\def\v#1{{\bf #1}}
\def\is{\equiv}
\def\mod#1{({\rm mod}\ #1)}
\def\hat{\widehat}
\title{A matrix version of Dwork's congruences}
\author{Frits Beukers}
\address{Utrecht University }
\email{f.beukers@uu.nl}
\thanks{
Work supported by the Netherlands Organisation for Scientific Research
(NWO), grant TOP1EW.15.313.}
\begin{document}
\maketitle

\begin{abstract}
In this article we give an example of a matrix version of the
famous congruence for hypergeometric functions found by Dwork in
'p-adic cycles'. 
\end{abstract}

\section{Introduction}
In this paper we shall deal with results of the following type.
Let $F(t)$ be an infinite power series with constant term 1
and coefficients in $\Z_p$, the $p$-adic numbers. Denote by $F(t)_m$ its $m$-th
truncation, i.e all terms of degree $\ge m$ are deleted. We shall be interested
whether there are hypergeometric series $F(t)$ for which 

\begin{equation}\label{rank1}
\frac{F(t)}{F(t^p)}\is \frac{F(t)_{p^s}}{F(t^p)_{p^{s-1}}}\mod{p^s}
\end{equation}
for all $s\ge1$. 
The first such result was given by Dwork in 'p-adic cycles', \cite{dwork69}, for
the case of $F(t)=F(\nicefrac12,\nicefrac12,1|t)$. We shall be interested in 
congruences of type (\ref{rank1}) for other hypergeometric series. Contrary to
my initial expectations the number of such results in the literature seems to
be very limited. The goal of the present paper is to provide some more hypergeometric
examples of (\ref{rank1}) and present an example of a matrix version. The underlying
machinery to produce such results are the papers \cite{BV19I}
and \cite{BV19II} (Dwork crystals I and II) in which we give an elementary approach
to the construction of the so-called unit root crystal in Dwork's $p$-adic theory
of zeta-functions of algebraic varieties. The present paper can be seen as an illustration
of the results in \cite{BV19I} and \cite{BV19II}. In Section \ref{MUM} we give some
straightforward rank one applications. In Section \ref{matrixexample} we present
our main result, Theorem \ref{main}, containing an example of a matrix version
of Dwork's congruence. Its proof
requires some ideas in addition to \cite{BV19I} and \cite{BV19II}.

{\bf Acknowledgement} I would like to thank Ling Long for our discussions
which gave rise to this paper. 

\section{Summary of \cite{BV19I} and \cite{BV19II}}\label{theory}
Let $R$ be a characteristic zero domain and $p$ an odd prime such that
$\cap_{s\ge1}p^sR=\{0\}$. Suppose that $R$ is $p$-adically complete.
Let $f \in R[x_1^{\pm1},\ldots,x_n^{\pm 1}]$ be a Laurent polynomial
and $\Delta \subset \R^n$ its Newton polytope. Let $\Delta^\circ$ be its interior. 
Consider the $R$-module $\Omega_f^\circ$ of differential forms generated over $R$ by
\[
\omega_{\v u}:= (k-1)!\frac{\v x^{\v u}}{f(\v x)^{k}}\frac{dx_1}{x_1}\wedge
\cdots\wedge\frac{dx_n}{x_n},\quad \v u\in k\Delta^\circ
\]
for all $k\ge1$. Contrary to \cite{BV19I} and \cite{BV19II} we have now written
the elements of $\Omega_f$ as differential forms. Let us abbreviate
$\frac{dx_1}{x_1}\wedge\cdots\wedge\frac{dx_n}{x_n}$ by $\frac{d\v x}{\v x}$.

Differential forms in $\Omega_f^\circ$ can be expanded as formal Laurent series. To that
end we choose a vertex $\v b$ of $\Delta$ and obtain a Laurent expansion with support
in $C(\Delta-\v b)$, the positive cone generated by the vectors in $\Delta-\v b$,
and coefficients in $R$ of the form
\[
\sum_{\v k\in C(\Delta-\v b)}a_{\v k}\v x^{\v k}\frac{d\v x}{\v x},\quad a_{\v k} \in R.
\]
We denote such forms by $\Omega_{\rm formal}$. The exact forms are denoted by
$d\Omega_{\rm formal}$. We call them formally exact forms and they are characterized
by the following lemma of Katz.

\begin{lemma}\label{exactcondition}
A series $\sum_{\v k\in C(\Delta-\v b)}a_{\v k}\v x^{\v k}\frac{d\v x}{\v x}$
is a formal derivative if and only if 
\[
a_{\v k}\is0\mod{p^{\ord_p(\v k)}}\quad\mbox{for all $\v k$}.
\] 
Here $\ord_p(k)$ denotes the $p$-adic valuation of $k$ and 
$\ord_p(\v k)=\min(\ord_p(k_1),\ldots,\ord_p(k_n))$. 
\end{lemma}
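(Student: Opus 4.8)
The plan is to reduce the statement to a monomial-by-monomial computation. A formal derivative is by definition a sum $d\eta$ where $\eta$ is a formal Laurent series $\eta = \sum_{\v j} b_{\v j}\v x^{\v j}$ (a form of degree $n-1$, but it is cleanest to work with the logarithmic coframe and write $\eta = \sum_{i,\,\v j} b_{i,\v j}\v x^{\v j}\,\widehat{\frac{dx_i}{x_i}}$, where the hat means omit the $i$-th factor). Taking the exterior derivative and using $d(\v x^{\v j}) = \v x^{\v j}\sum_i j_i\,\frac{dx_i}{x_i}$, one finds that $d\eta = \sum_{\v k} a_{\v k}\v x^{\v k}\frac{d\v x}{\v x}$ with each $a_{\v k}$ an integer linear combination of the $b$-coefficients against the entries $k_1,\dots,k_n$. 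Concretely, the span of the $\v x^{\v k}\frac{d\v x}{\v x}$ for a fixed $\v k$ that arises as an exact term is exactly the $R$-submodule generated by $k_1 \v x^{\v k}\frac{d\v x}{\v x},\dots,k_n\v x^{\v k}\frac{d\v x}{\v x}$, which as a subset of $R\cdot\v x^{\v k}\frac{d\v x}{\v x} \cong R$ is the ideal $(k_1,\dots,k_n)R$.

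So the key step is: the coefficient $a_{\v k}$ of $\v x^{\v k}\frac{d\v x}{\v x}$ in a formal derivative ranges exactly over the ideal of $R$ generated by $k_1,\dots,k_n$, independently for each $\v k$ (there is no interaction between distinct monomials $\v x^{\v k}$ since $d$ preserves the multidegree). Since $R$ is a characteristic-zero domain that is $p$-adically complete with $\cap_s p^sR = 0$, and $\gcd(k_1,\dots,k_n)$ has $p$-adic valuation $\ord_p(\v k) = \min_i \ord_p(k_i)$, one checks that $(k_1,\dots,k_n)R \supseteq p^{\ord_p(\v k)}R$ always, and that the reverse containment holds after $p$-adic completion in the sense relevant here: $a_{\v k}$ is realizable as an exact coefficient if and only if $a_{\v k} \in p^{\ord_p(\v k)}R$. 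The ``only if'' is immediate ($a_{\v k} \in (k_1,\dots,k_n)R \subseteq p^{\ord_p(\v k)}R$); the ``if'' direction requires exhibiting, for a given $a_{\v k} \in p^{\ord_p(\v k)}R$, a choice of $b$-coefficients solving the linear system — pick $i$ with $\ord_p(k_i) = \ord_p(\v k)$, write $a_{\v k} = p^{\ord_p(\v k)}c$, and set $b_{i,\v k - \v e_i} = c\cdot p^{\ord_p(\v k)}/k_i$, which lies in $R$ because $p^{\ord_p(\v k)}/k_i$ is a $p$-adic unit times a unit... more carefully, $k_i = p^{\ord_p(\v k)}v$ with $v \in \Z_p^\times$, so $a_{\v k}/k_i = c/v \in R$ after using $p$-adic completeness to invert $v$.

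Then one assembles the global statement: a formal Laurent series $\sum_{\v k} a_{\v k}\v x^{\v k}\frac{d\v x}{\v x}$ is a formal derivative iff each $a_{\v k}$ independently satisfies the congruence $a_{\v k}\equiv 0 \pmod{p^{\ord_p(\v k)}}$, and the candidate antiderivative $\eta = \sum_{\v k} b_{\v k}\cdots$ built coefficientwise above is itself a legitimate formal Laurent series supported in the appropriate cone (its support is contained in the support of the original form shifted by the $-\v e_i$, which stays within $C(\Delta - \v b)$ up to the usual care near the boundary, or one simply notes the support condition is inherited). The main obstacle I anticipate is purely bookkeeping rather than conceptual: making the coframe/antiderivative construction clean when several $k_i$ vanish (so $\ord_p(\v k) = \infty$ is impossible but $\ord_p$ of a zero entry is $\infty$ and one must pick a nonzero $k_i$ achieving the minimum valuation among the \emph{nonzero} entries — and if \emph{all} $k_i = 0$, i.e. $\v k = \v 0$, then $\ord_p(\v 0) = \infty$ and indeed the constant term of a derivative must vanish, consistent with the congruence mod $p^\infty$), and ensuring that the $p$-adic convergence of the antiderivative is automatic because we only ever divide $a_{\v k}$ by a $p$-adic unit.
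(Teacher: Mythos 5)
Your argument is correct, and it is the standard coefficient-by-coefficient proof of Katz's criterion (the paper itself gives no proof of this lemma, quoting it from Katz and \cite{BV19I}, where essentially your argument appears): exactness preserves multidegree, the coefficient of $\v x^{\v k}\frac{d\v x}{\v x}$ in an exact form sweeps out exactly the ideal $(k_1,\dots,k_n)R$, and this ideal equals $p^{\ord_p(\v k)}R$ because the prime-to-$p$ part of $\gcd(k_1,\dots,k_n)$ is a unit in the $p$-adically complete ring $R$. One small simplification: since you work with the logarithmic coframe $\widehat{dx_i/x_i}$, the operators are $\theta_i=x_i\partial_{x_i}$ and there is no $-\v e_i$ shift of supports at all, so the "care near the boundary" you mention is not needed -- the antiderivative is supported on exactly the same monomials $\v x^{\v k}$ as the given form.
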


We define the Cartier operator $\cartier$ on $\Omega_{\rm formal}$ by
\begin{equation}\label{cartier-on-series-def}
\cartier\left(\sum_{\v k}a_{\v k}\v x^{\v k}\frac{d\v x}{\v x}\right):
=\sum_{\v k}a_{p\v k}\v x^{\v k}\frac{d\v x}{\v x}.
\end{equation}
Using $\cartier$ we have an alternative characterization of formally exact forms
which is a direct consequence of Lemma \ref{exactcondition}.

\begin{lemma}\label{exactcondition2}
A series $h\in\Omega_{\rm formal}$ is a formal derivative if and only if
$\cartier^s(h)\is0\mod{p^s}$ for all integers $s\ge1$.
\end{lemma}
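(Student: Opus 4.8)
The plan is to translate everything back to Lemma \ref{exactcondition} by simply unwinding the definition of $\cartier$. Writing $h=\sum_{\v k\in C(\Delta-\v b)}a_{\v k}\v x^{\v k}\frac{d\v x}{\v x}$, an immediate induction on $s$ using (\ref{cartier-on-series-def}) gives $\cartier^s(h)=\sum_{\v k}a_{p^s\v k}\v x^{\v k}\frac{d\v x}{\v x}$. Hence the hypothesis ``$\cartier^s(h)\is0\mod{p^s}$ for all $s\ge1$'' is literally the assertion that $a_{p^s\v k}\is0\mod{p^s}$ for every $\v k\in C(\Delta-\v b)\cap\Z^n$ and every $s\ge1$. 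So it suffices to show that this family of congruences is equivalent to the single family $a_{\v k}\is0\mod{p^{\ord_p(\v k)}}$ (for all $\v k$), which by Lemma \ref{exactcondition} is exactly the condition for $h$ to be a formal derivative.

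For the implication ``$h$ formally exact $\Rightarrow$ the Cartier congruences hold'', I would take the characterization $a_{\v m}\is0\mod{p^{\ord_p(\v m)}}$ from Lemma \ref{exactcondition} and substitute $\v m=p^s\v k$. Since $\ord_p(p^s\v k)=s+\ord_p(\v k)\ge s$, this yields $a_{p^s\v k}\is0\mod{p^s}$, as wanted. The converse is where one actually uses that $p^{\ord_p(\v k)}$ divides $\v k$ componentwise: given the Cartier congruences and an arbitrary $\v m\in C(\Delta-\v b)\cap\Z^n$, put $t=\ord_p(\v m)$. If $t=0$ there is nothing to prove; if $t\ge1$, then $p^t\mid m_i$ for each $i$, so $\v k:=\v m/p^t$ lies in $\Z^n$ and still lies in the cone $C(\Delta-\v b)$, the latter being stable under multiplication by the positive scalar $p^{-t}$. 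Applying the hypothesis with this $\v k$ and $s=t$ gives $a_{\v m}=a_{p^t\v k}\is0\mod{p^t}$, i.e. $a_{\v m}\is0\mod{p^{\ord_p(\v m)}}$. Lemma \ref{exactcondition} then tells us $h$ is a formal derivative, completing the equivalence.

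I do not expect a genuine obstacle here; the argument is bookkeeping with $p$-adic valuations of lattice vectors. The only point deserving a line of care is the stability of the support set $C(\Delta-\v b)\cap\Z^n$ under the maps $\v k\mapsto p^s\v k$ and $\v m\mapsto p^{-\ord_p(\v m)}\v m$, so that all the coefficients $a_{\v k}$ manipulated above are genuinely defined (with the convention $a_{\v k}=0$ off the actual support of $h$). Once that is recorded, the statement drops out of the identity $\ord_p(p^s\v k)=s+\ord_p(\v k)$ together with componentwise divisibility, and Lemma \ref{exactcondition} supplies the bridge to formal exactness.
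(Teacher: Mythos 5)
Your argument is correct and is exactly what the paper intends: Lemma \ref{exactcondition2} is stated there as ``a direct consequence of Lemma \ref{exactcondition}'' with no further proof, and your translation via $\cartier^s(h)=\sum_{\v k}a_{p^s\v k}\v x^{\v k}\frac{d\v x}{\v x}$ together with $\ord_p(p^s\v k)=s+\ord_p(\v k)$ and the divisibility $p^{\ord_p(\v m)}\mid \v m$ is precisely that consequence made explicit. (The only term outside your case split $t=0$ versus $1\le t<\infty$ is $\v m=\v 0$, where the hypothesis gives $a_{\v 0}\is 0\mod{p^s}$ for every $s$ and hence $a_{\v 0}=0$ by $\cap_{s\ge1}p^sR=\{0\}$.)
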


When applied to a rational differential form $\cartier$ acts as
$$\cartier\left(S(\v x)\frac{d\v x}{\v x}\right)=
\sum_{\v y:\v y^p=\v x}S(\v y)\frac{d\v y}{\v y}.$$
The summation extends over all $y_i=\zeta_i x_i^{1/p}, i=1,\ldots,n$, where each $\zeta_i$
runs over all $p$-th roots of unity. So we see that $\cartier$ sends rational
differential forms to rational differential forms. Unfortunately, $\Omega_f^\circ$
is not sent to itself. But we have something that comes close. Define
the $p$-adic completion
$$\hat{\Omega}_f^\circ:=\underset{\leftarrow}\lim  \Omega_f^\circ/ p^s \Omega_f^\circ.$$
Fix a Frobenius lift $\sigma$ on $R$: this is a ring endomorphism $\sigma: R \to R$
such that $\sigma(r) \is r^p \mod {p}$ for every $r \in R$. We have

\begin{proposition}\label{cartier-def} 
If $p>2$ then $\cartier(\Omega_f^\circ) \subset \hat \Omega_{f^{\sigma}}^\circ$.
\end{proposition}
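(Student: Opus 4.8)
The plan is to check the inclusion on the generators of $\Omega_f^\circ$ as an $R$-module, that is, to show $\cartier(\omega_{\v u})\in\hat\Omega_{f^\sigma}^\circ$ for each $k\ge1$ and each $\v u\in k\Delta^\circ$, where $\omega_{\v u}=(k-1)!\,\v x^{\v u}f(\v x)^{-k}\,\frac{d\v x}{\v x}$. The point is that $\cartier$ interacts nicely with denominators of the shape $f^\sigma(\v x^p)^m$: such a factor is a power series in $\v x^p$, hence invariant under each substitution $\v y\mapsto\v x$ with $\v y^p=\v x$, so it pulls out of $\cartier$ to become $f^\sigma(\v x)^m$. To create such a denominator I would first use the congruence coming from $\sigma(r)\is r^p\mod{p}$ together with the fact that the non-trivial multinomial coefficients in $(\sum_{\v w}a_{\v w}\v x^{\v w})^p$ are divisible by $p$: this gives $f(\v x)^p\is f^\sigma(\v x^p)\mod{p}$ in the Laurent polynomial ring, so
\[
f(\v x)^p=f^\sigma(\v x^p)+p\,g(\v x)
\]
for some Laurent polynomial $g$ with $\supp(g)\subseteq p\Delta$ (note that $f^\sigma$ again has Newton polytope $\Delta$).

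Next I would write $f(\v x)^{-k}=f(\v x)^{(p-1)k}f(\v x)^{-pk}$ and expand the binomial series for $\big(f^\sigma(\v x^p)+p\,g(\v x)\big)^{-k}$, obtaining, as a $p$-adically convergent identity of formal Laurent expansions around the chosen vertex,
\[
\frac{\v x^{\v u}}{f(\v x)^k}\,\frac{d\v x}{\v x}=\sum_{j\ge0}(-1)^j\binom{k+j-1}{j}\,p^j\,\frac{\v x^{\v u}f(\v x)^{(p-1)k}g(\v x)^j}{f^\sigma(\v x^p)^{\,k+j}}\,\frac{d\v x}{\v x}.
\]
Applying $\cartier$ term by term (valid since $\cartier$ is $p$-adically continuous), the factor $f^\sigma(\v x^p)^{k+j}$ comes out of the $\cartier$ as $f^\sigma(\v x)^{k+j}$, and $\cartier$ is applied to the Laurent polynomial $P_j(\v x):=\v x^{\v u}f(\v x)^{(p-1)k}g(\v x)^j$. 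Here the Newton-polytope bookkeeping enters: the support of $P_j$ lies in $\v u+\big((p-1)k+pj\big)\Delta\subseteq k\Delta^\circ+\big((p-1)k+pj\big)\Delta\subseteq p(k+j)\Delta^\circ$, using $A^\circ+B\subseteq(A+B)^\circ$, so $\cartier\big(P_j\,\tfrac{d\v x}{\v x}\big)=\sum_{\v v}c^{(j)}_{\v v}\,\v x^{\v v}\tfrac{d\v x}{\v x}$ is a Laurent polynomial with $c^{(j)}_{\v v}\in R$ supported on $\v v\in(k+j)\Delta^\circ$. Since $\v x^{\v v}f^\sigma(\v x)^{-(k+j)}\tfrac{d\v x}{\v x}=\tfrac1{(k+j-1)!}\,\omega^{f^\sigma}_{\v v}$ for such $\v v$, and since
\[
(k-1)!\,\binom{k+j-1}{j}\cdot\frac1{(k+j-1)!}=\frac1{j!},
\]
the $j$-th term of $\cartier(\omega_{\v u})$ equals $\frac{(-1)^jp^j}{j!}\sum_{\v v}c^{(j)}_{\v v}\,\omega^{f^\sigma}_{\v v}$, which (as $p^j/j!\in R$) is an element of $\Omega_{f^\sigma}^\circ$.

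Finally I would observe that $\ord_p(p^j/j!)=j-\ord_p(j!)\ge j-\tfrac{j}{p-1}=\tfrac{p-2}{p-1}\,j\longrightarrow\infty$ as $j\to\infty$ exactly because $p>2$; hence the partial sums of $\sum_j\frac{(-1)^jp^j}{j!}\sum_{\v v}c^{(j)}_{\v v}\,\omega^{f^\sigma}_{\v v}$ form a $p$-adic Cauchy sequence in $\Omega_{f^\sigma}^\circ$, so the series converges to $\cartier(\omega_{\v u})$ in $\hat\Omega_{f^\sigma}^\circ=\underset{\leftarrow}\lim\,\Omega_{f^\sigma}^\circ/p^s\Omega_{f^\sigma}^\circ$, as required. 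I expect the main obstacle to be precisely this last estimate together with the algebraic identity preceding it: the factorial $(k+j-1)!$ produced by the generators of the target module must be absorbed, and it is absorbed only up to $1/j!$ by the binomial coefficient from the expansion, leaving $p^j/j!$ — which tends to $0$ when $p$ is odd but not when $p=2$, where $\ord_2(2^j/j!)$ equals the binary digit sum of $j$ and stays bounded. The remaining steps — legitimacy of the term-by-term application of $\cartier$ to the formal expansions, and the polytope containments keeping every support inside the relevant open dilate — are routine given \cite{BV19I}.
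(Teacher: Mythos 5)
Your proof is correct and is essentially the argument of \cite[Prop.~5]{BV19I} to which the paper defers: writing $f^p=f^\sigma(\v x^p)+pg$, expanding $(f^\sigma(\v x^p)+pg)^{-k}$ binomially, pulling the $f^\sigma(\v x^p)$-denominators through $\cartier$, and noting that the leftover factor $p^j/j!$ tends to $0$ $p$-adically exactly when $p>2$ is precisely the ``straightforward computation ending with a $p$-adic expansion in $\hat\Omega_{f^\sigma}^\circ$'' described there. The polytope bookkeeping and the identification of the obstruction at $p=2$ are also as in that reference, so there is nothing to add.
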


The proof is given in \cite[Prop 5]{BV19I} and consists of a straightforward computation
ending with a $p$-adic expansion in $\hat\Omega_{f^\sigma}^\circ$. 

We shall be interested in $U_f^\circ:=\hat\Omega_f^\circ\cap d\Omega_{\rm formal}$. 
These are differential forms that are not necessarily exact but become
exact when embedded in the formal expansions. Katz refers to them as 'forms that
die on formal expansion'. In \cite[Prop 10]{BV19I} we find a characterization 
of the elements of $U_f^\circ$ without any reference to formal expansion.

\begin{proposition}\label{Uf-formal-exact}
With the notations as above we have 
$$
U_f^\circ=\{\omega\in\hat\Omega_f^\circ \;|\; \cartier^s(\omega)\is0
\mod{p^s \hat \Omega_{f^{\sigma^s}}^\circ} \ \mbox{for all $s\ge1$}\}.
$$
\end{proposition}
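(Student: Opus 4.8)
This is an equality of two subsets of $\hat\Omega_f^\circ$, and the plan is to prove the two inclusions separately — the first is essentially formal, the second needs one extra input. Three ingredients are used throughout. First, $\cartier$ sends $p^s\Omega_f^\circ$ into $p^s\hat\Omega_{f^\sigma}^\circ$ (on series it is the $R$-linear operator \eqref{cartier-on-series-def}), hence is $p$-adically continuous; so it extends to the completions, and iterating Proposition \ref{cartier-def} with $f$ replaced successively by its Frobenius twists gives $\cartier^s(\hat\Omega_f^\circ)\subset\hat\Omega_{f^{\sigma^s}}^\circ$ for all $s\ge1$. Second, formal expansion around a chosen vertex $\v b$ of $\Delta$ is an $R$-linear map $\hat\Omega_f^\circ\to\Omega_{\rm formal}$ — no completion of $\Omega_{\rm formal}$ is needed since $R$ is already $p$-adically complete — which intertwines the Cartier operator on forms with the operator \eqref{cartier-on-series-def} on series; this is immediate from the two descriptions of $\cartier$ recalled above. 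Third, Lemma \ref{exactcondition2}.

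For the inclusion $\supseteq$ I would take $\omega\in\hat\Omega_f^\circ$ with $\cartier^s\omega\in p^s\hat\Omega_{f^{\sigma^s}}^\circ$ for every $s\ge1$, and let $h\in\Omega_{\rm formal}$ be its formal expansion. Applying the formal expansion map together with the intertwining property, $\cartier^s h$ is the formal expansion of $\cartier^s\omega$, hence has all coefficients in $p^sR$, i.e.\ $\cartier^s h\is0\mod{p^s}$ for all $s$; by Lemma \ref{exactcondition2} this forces $h\in d\Omega_{\rm formal}$, so $\omega\in\hat\Omega_f^\circ\cap d\Omega_{\rm formal}=U_f^\circ$. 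For the inclusion $\subseteq$ I would start from $\omega\in U_f^\circ$, so that $\omega\in\hat\Omega_f^\circ$ and its formal expansion equals $dg$ for some $g\in\Omega_{\rm formal}$. Then $\cartier^s\omega\in\hat\Omega_{f^{\sigma^s}}^\circ$ by the iterated Proposition \ref{cartier-def}, its formal expansion is $\cartier^s(dg)$, and the ``formal derivative $\Rightarrow$ divisibility'' half of Lemma \ref{exactcondition2} shows that this series has all coefficients in $p^sR$.

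The hard part is the last step of the $\subseteq$ direction: upgrading ``the formal expansion of $\cartier^s\omega$ has all coefficients in $p^sR$'' to ``$\cartier^s\omega\in p^s\hat\Omega_{f^{\sigma^s}}^\circ$''. What this needs is that formal expansion \emph{detects} $p$-divisibility, i.e.\ that for each twist $g=f^{\sigma^s}$ the image of $\hat\Omega_g^\circ$ in $\Omega_{\rm formal}$ is $p$-saturated (equivalently, the cokernel is $p$-torsion free; equivalently again, any $\eta\in\Omega_g^\circ$ whose formal expansion lies in $p\Omega_{\rm formal}$ already lies in $p\Omega_g^\circ$, after which one divides by $p$ and induces on the exponent). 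I expect this to be the only genuinely non-formal point, and to follow from the explicit $R$-module description of $\Omega_g^\circ$ in \cite{BV19I}: one produces an $R$-spanning family of monomial forms $\v x^{\v u}/g^k\cdot d\v x/\v x$ supported in the interior cones $k\Delta^\circ$ whose formal expansions remain $R$-independent modulo $p$, so that a relation ``formal expansion $=p\cdot(\text{series in }\Omega_{\rm formal})$'' can be solved coefficient by coefficient inside $\Omega_g^\circ$. Granting this, one divides $\cartier^s\omega$ by $p^s$ to land back in $\hat\Omega_{f^{\sigma^s}}^\circ$, which completes the proof.
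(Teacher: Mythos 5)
The paper itself does not prove this proposition: it quotes it from \cite[Prop 10]{BV19I}, so the comparison below is with the argument in that reference. Your skeleton --- two inclusions, with $\supseteq$ following from Lemma \ref{exactcondition2} and the compatibility of the two descriptions of $\cartier$ under formal expansion, and $\subseteq$ reduced to the question of whether formal expansion detects $p$-divisibility --- is the right one, and your $\supseteq$ direction is complete. But the lemma you rely on for the hard step is false: the image of $\hat\Omega_g^\circ$ in $\Omega_{\rm formal}$ is \emph{not} $p$-saturated. The generators $\omega_{\v u}=(k-1)!\,\v x^{\v u}f^{-k}\,\frac{d\v x}{\v x}$ carry the factor $(k-1)!$, so for $k\ge p+1$ the generator itself has formal expansion lying in $p\Omega_{\rm formal}$, while $\frac1p\omega_{\v u}=\frac{(k-1)!}{p}\,\v x^{\v u}f^{-k}\,\frac{d\v x}{\v x}$ in general does not lie in $\hat\Omega_f^\circ$. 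Concretely, take $n=1$, $p=3$, $f=x^2+x+1$, $k=4$, $1\le u\le 7$: the image of $\hat\Omega_f^\circ$ in $\Omega_{\rm formal}/3\Omega_{\rm formal}$ coincides with that of $\Omega_f^\circ$ and is spanned by the classes of $x^v f^{-l}\,dx/x$ with $l\le 3$ (the generators with $l\ge 4$ vanish mod $3$ because of the factorial), and $2x^u f^{-4}\,dx/x$ is not in that span, since membership would force $x^u\equiv 0 \bmod{(3,f)}$, which fails as $f\equiv (x-1)^2 \bmod 3$. So ``expansion in $p^s\Omega_{\rm formal}$'' does not imply ``lies in $p^s\hat\Omega^\circ$'', and the mod-$p$ independence of leading terms that you invoke breaks down precisely because of the factorials in the generators.

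The actual argument replaces global saturation by saturation for pole order one only, combined with a finer property of the Cartier operator: by \cite[Prop 7]{BV19I} (quoted in the present paper in the proof of the congruence \eqref{lift-invariance}), for every $\omega\in\hat\Omega_f^\circ$ one has $\cartier(\omega)=\frac{A}{f^\sigma}\frac{d\v x}{\v x}+p\omega_1$ with $A$ a Laurent polynomial supported on the finitely many points of $\Delta^\circ\cap\Z^n$ and $\omega_1\in\hat\Omega_{f^\sigma}^\circ$. If $\omega\in U_f^\circ$, then the expansion of $\cartier(\omega)$ lies in $p\,d\Omega_{\rm formal}\subset p\Omega_{\rm formal}$, hence so does the expansion of $\frac{A}{f^\sigma}\frac{d\v x}{\v x}$; for forms of this restricted shape the coefficients of the expansion at the exponents $\v u-\v b$, $\v u\in\Delta^\circ$, form a triangular system with unit diagonal in the coefficients of $A$, so $A\equiv 0\bmod p$. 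Therefore $\cartier(\omega)\in p\hat\Omega_{f^\sigma}^\circ$, and since $\Omega_{\rm formal}$ is torsion free, $\frac1p\cartier(\omega)\in\hat\Omega_{f^\sigma}^\circ\cap d\Omega_{\rm formal}=U_{f^\sigma}^\circ$; iterating gives $\cartier^s(\omega)\in p^s U_{f^{\sigma^s}}^\circ\subset p^s\hat\Omega_{f^{\sigma^s}}^\circ$. This use of the mod-$p$ collapse of $\cartier$ onto the pole-order-one part is the ingredient your proposal is missing, and without it the $\subseteq$ inclusion does not go through.
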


We now come to one of the main results in \cite[Thm 11]{BV19I}.
Let $h=|\Delta^\circ\cap\Z^n|$.
Define the Hasse-Witt matrix $\beta_p$ as the $h\times h$-matrix  given by
$$(\beta_p)_{\v u,\v v}=\mbox{coefficient of $\v x^{p\v u-\v v}$ of }f(\v x)^{p-1},
\quad \v u,\v v\in\Delta^\circ\cap\Z^n$$

\begin{theorem}
Suppose $\det(\beta_p)$ is invertible in $R$. Then $\Omega_f^\circ/U_f^\circ$ is a
free $R$-module of rank $h$ with basis $\frac{\v x^{\v u}}{f}\frac{d\v x}{\v x}$,
$\v u\in\Delta^\circ\cap\Z^n$. 
\end{theorem}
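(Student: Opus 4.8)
The plan is to show that the $R$-linear map $\phi\colon R^h\to\Omega_f^\circ/U_f^\circ$ sending the standard basis vectors to the classes of $\omega_{\v u}:=\frac{\v x^{\v u}}{f}\frac{d\v x}{\v x}$ ($\v u\in\Delta^\circ\cap\Z^n$) is an isomorphism; write $L:=\sum_{\v u\in\Delta^\circ\cap\Z^n}R\,\omega_{\v u}\subset\hat\Omega_f^\circ$. The one computational input driving everything is the value of $\cartier$ on these generators modulo $p$. Using $f(\v x)^p\is f^\sigma(\v x^p)\mod{p}$ and the strict convexity of $\Delta$, which gives $\Delta^\circ+(p-1)\Delta\subset p\Delta^\circ$ so that after applying $\cartier$ to $\omega_{\v v}=\frac{\v x^{\v v}f^{p-1}}{f^p}\frac{d\v x}{\v x}$ no monomials outside $\Delta^\circ\cap\Z^n$ and no poles of order $>1$ survive modulo $p$, one obtains
\[
\cartier(\omega_{\v v})\;\is\;\sum_{\v u\in\Delta^\circ\cap\Z^n}(\beta_p)_{\v u,\v v}\,\frac{\v x^{\v u}}{f^\sigma}\frac{d\v x}{\v x}\qquad\mod{p\,\hat\Omega_{f^\sigma}^\circ}.
\]
Iterating, $\cartier^s|_L$ is given modulo $p\,\hat\Omega_{f^{\sigma^s}}^\circ$ by a product $\beta_p^{\sigma^{s-1}}\cdots\beta_p^\sigma\beta_p$ of $\sigma$-twists of $\beta_p$, which is invertible over $R/pR$ since $\det\beta_p\in R^\times$. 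I will also use two elementary remarks: a series $g/f\cdot\frac{d\v x}{\v x}$ with $g$ a Laurent polynomial lies in $p\,\hat\Omega_f^\circ$ only if $g\is0\mod{p}$ (as $f$ is a unit in the pertinent ring of Laurent series), and $\cartier(U_f^\circ)\subseteq pU_{f^\sigma}^\circ$ (immediate from Proposition~\ref{Uf-formal-exact}).

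\emph{Injectivity, i.e.\ $L\cap U_f^\circ=\{0\}$.} Let $\ell=\sum_{\v u}c_{\v u}\omega_{\v u}\in U_f^\circ$ with coefficient vector $c=(c_{\v u})\in R^h$. Applying $\cartier$ and reducing modulo $p$ by the displayed formula, $\cartier(\ell)\is\bigl(\sum_{\v u}(\beta_pc)_{\v u}\v x^{\v u}\bigr)/f^\sigma\cdot\frac{d\v x}{\v x}\mod{p\,\hat\Omega_{f^\sigma}^\circ}$; since $\ell\in U_f^\circ$ forces $\cartier(\ell)\is0\mod{p}$, the first remark gives $\beta_pc\is0\mod{p}$, hence $c\is0\mod{p}$ as $\beta_p$ is invertible mod $p$. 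Write $c=pc'$. From $\cartier^2(\ell)\is0\mod{p^2}$ we get $\cartier^2(\sum_{\v u}c'_{\v u}\omega_{\v u})\is0\mod{p}$, and the same argument with $\cartier^2$ (matrix $\beta_p^\sigma\beta_p$, again invertible mod $p$) gives $c'\is0\mod{p}$. Repeating, $c\in p^mR^h$ for every $m$, so $c=0$ because $\cap_{s\ge1}p^sR=\{0\}$. Since moreover the $\omega_{\v u}$ are $R$-linearly independent in $\hat\Omega_f^\circ$ — a minimal-term argument in their Laurent expansions about a fixed vertex of $\Delta$, where $\omega_{\v u}$ has leading term a unit times $\v x^{\v u}$ — the map $\phi$ is injective.

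\emph{Surjectivity.} The classical Griffiths--Dwork reduction of pole order, carried out over $R[1/p]$, writes any $\omega\in\Omega_f^\circ$ as an $R[1/p]$-linear combination of the $\omega_{\v u}$ plus an exact form; the nondegeneracy of $f$ needed for this follows from $\det\beta_p\in R^\times$ (modulo $p$ the Hasse--Witt matrix governs the relevant toric Jacobian quotient). Clearing denominators and using that $\Omega_f^\circ\cap d\Omega_{\rm formal}\subseteq U_f^\circ$, an induction on pole order produces, for each $\omega\in\Omega_f^\circ$, an integer $N\ge0$ with $p^N\omega\in L+U_f^\circ$. One removes the factor $p^N$ by descent on $N$: from $p^N\omega=\ell+u$ with $N\ge1$, $\ell\in L$, $u\in U_f^\circ$, applying $\cartier^N$ gives $\cartier^N(\ell)=p^N\cartier^N(\omega)-\cartier^N(u)\in p^N\hat\Omega_{f^{\sigma^N}}^\circ\subseteq p\,\hat\Omega_{f^{\sigma^N}}^\circ$ (using $\cartier^N(u)\in p^NU_{f^{\sigma^N}}^\circ$), so by the displayed formula and the invertibility of $\beta_p^{\sigma^{N-1}}\cdots\beta_p$ the coefficient vector of $\ell$ is $\is0\mod{p}$, i.e.\ $\ell=p\ell_1$ with $\ell_1\in L$. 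Then $u=p(p^{N-1}\omega-\ell_1)$, so $v:=p^{N-1}\omega-\ell_1\in\hat\Omega_f^\circ$ has pole order no larger than that of $\omega$ and satisfies $pv\in U_f^\circ$; a further induction on pole order — splitting $v$ via Griffiths--Dwork into a lower-pole-order part, handled inductively, plus an exact correction absorbed into $U_f^\circ$, and again invoking the Cartier computation to clear the resulting power of $p$ — shows $v\in L+U_f^\circ$, whence $p^{N-1}\omega\in L+U_f^\circ$ and the descent closes. Thus $\Omega_f^\circ\subseteq L+U_f^\circ$ and $\phi$ is onto.

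Combining the two halves, $\phi$ is an isomorphism, so $\Omega_f^\circ/U_f^\circ$ is free of rank $h=|\Delta^\circ\cap\Z^n|$ with the asserted basis. The main obstacle is the surjectivity, and within it the elimination of the powers of $p$: the Griffiths--Dwork reduction is only available over $R[1/p]$, and proving that no $p$-denominators actually occur — equivalently, that the rank-$h$ unit-root quotient $\Omega_f^\circ/U_f^\circ$ is free over $R$ and not merely over $R[1/p]$ — is precisely where the ordinariness hypothesis $\det\beta_p\in R^\times$ (rather than just $\det\beta_p\neq0$) is genuinely used, through the Cartier computation above played against the defining congruences $\cartier^s(\cdot)\is0\mod{p^s\hat\Omega_{f^{\sigma^s}}^\circ}$ of $U_f^\circ$.
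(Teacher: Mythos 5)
Your injectivity half is fine, and it runs on exactly the right fuel: the congruence $\cartier(\omega_{\v v})\is\sum_{\v u}(\beta_p)_{\v u,\v v}\,\omega^\sigma_{\v u}\ \mod{p\hat\Omega^\circ_{f^\sigma}}$, iterated to give the matrix $\beta_p^{\sigma^{s-1}}\cdots\beta_p$, combined with Proposition \ref{Uf-formal-exact}, invertibility of $\beta_p$ mod $p$, and $\cap_s p^sR=\{0\}$, does yield $L\cap U_f^\circ=0$. (Note that this paper does not prove the theorem at all; it quotes it from Theorem 11 of \cite{BV19I}, so the comparison below is with that argument.)

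The surjectivity half has a genuine gap, and it is not a repairable technicality. First, the claim that $\det\beta_p\in R^\times$ supplies the nondegeneracy needed for Griffiths--Dwork is unjustified: invertibility of the Hasse--Witt matrix is a mod-$p$ condition on a few coefficients of $f^{p-1}$ and is not a substitute for $\Delta$-regularity of $f$ -- indeed the whole point of \cite{BV19I} is that no such smoothness hypothesis is needed. Second, and decisively, even for nondegenerate $f$ the asserted reduction is impossible when $n\ge2$: modulo \emph{rational} exact forms, $\Omega_f^\circ$ has rank governed by the normalized volume of $\Delta$ (for a smooth plane cubic this quotient has rank $2$, with a pole-order-two class that cannot be reduced, while $h=1$), so a general $\omega\in\Omega_f^\circ$ is simply not an $R[1/p]$-combination of the $h$ forms $\omega_{\v u}$ plus an exact form. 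The drop from rank $\mathrm{vol}(\Delta)$ to rank $h$ occurs only upon dividing by the much larger module $U_f^\circ$ of \emph{formally} exact forms; that is the $p$-adic content of the theorem, not an input one can import from classical pole-order reduction (the one-variable case, where Proposition \ref{formal2rational} does reduce everything rationally to $d-1$ forms, is misleading here). Consequently your descent on $N$ never gets started, and its final step ($pv\in U_f^\circ\Rightarrow v\in L+U_f^\circ$) is in any case just a restatement of the surjectivity being proved. The argument of \cite{BV19I} avoids all of this via the key fact -- quoted in the present paper as ``Proposition 7 in \cite{BV19I}'' -- that $\cartier(\hat\Omega_f^\circ)\subset L_{f^\sigma}+p\hat\Omega^\circ_{f^\sigma}$, where $L_{f^\sigma}$ is the span of the $\v x^{\v u}f^{-\sigma}\frac{d\v x}{\v x}$, $\v u\in\Delta^\circ\cap\Z^n$; this is proved by writing $(k-1)!/f^{k}=(k-1)!\,f^{pm-k}/f^{pm}$ with $m=\ceil{k/p}$, expanding $f^{pm}=f^\sigma(\v x^p)^m-p(\cdots)$ geometrically, and letting the factorial normalization absorb the resulting powers of $p$. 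Given that fact, $\hat\Omega_f^\circ=L+U_f^\circ$ follows by successive approximation: solve $\beta_p c\is$ (coefficient vector of $\cartier\omega$) mod $p$, correct the solution modulo $p^2$ using the same fact applied to $p^{-1}\cartier^2(\omega-\ell_1)$, and pass to the $p$-adic limit, using only invertibility of $\beta_p$ and Proposition \ref{Uf-formal-exact} -- no Griffiths--Dwork and no nondegeneracy. Replacing your surjectivity argument by this approximation scheme (your injectivity half can stay as is) would give a complete proof.
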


The remainder of \cite{BV19I} and \cite{BV19II} is then devoted to the construction
of $p$-adic approximations to the $h\times h$-matrix of the Cartier operator.
In \cite{BV19II} we give special attention to those approximations that give rise
to congruences of the form (\ref{rank1}) (in case $h=1$) and higher. 

\section{First examples}\label{MUM}
In \cite{MV16} we find a very general theorem providing congruences of the
form (\ref{rank1}).

\begin{theorem}[Mellit-Vlasenko]\label{MVthm}
Let $g(\v x)\in\Z_p[x_1^{\pm1},\ldots,x_n^{\pm1}]$ be a Laurent polynomial in the
variables $x_1,\ldots,x_n$. Suppose that the Newton polytope $\Delta$ of $g$
has the origin as unique interior lattice point. For every integer $r\ge0$
denote by $f_r$ the constant term of $g(\v x)^r$ and define $F(t)=\sum_{r\ge0}f_rt^r$.
Then the congruences (\ref{rank1}) hold for all $s\ge1$. 
\end{theorem}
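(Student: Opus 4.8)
The plan is to deduce the Mellit--Vlasenko theorem from the $h=1$ case of the machinery summarised in Section \ref{theory}, applied to the Laurent polynomial $f(\v x_0,\v x)=x_0 g(\v x)$ in $n+1$ variables (or, equivalently, to work directly with $g$ and its one interior lattice point, the origin). The hypothesis that $\Delta^\circ\cap\Z^n=\{\v 0\}$ gives $h=1$, so $\Omega_g^\circ/U_g^\circ$ is (when the relevant Hasse--Witt element $\beta_p$ is a $p$-adic unit) a free rank-one module spanned by the class of $\tfrac{1}{g}\tfrac{d\v x}{\v x}$. First I would identify the generating function $F(t)=\sum_{r\ge0}f_rt^r$ with a period: expanding $\tfrac{1}{1-tg(\v x)}\tfrac{d\v x}{\v x}$ as $\sum_{r\ge0}t^r g(\v x)^r\tfrac{d\v x}{\v x}$ and extracting the constant term (the coefficient of $\v x^{\v 0}$, i.e. the residue at the unique interior point) produces exactly $F(t)$. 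Thus $F(t)$ is the period attached to the family $f_t(\v x)=1-tg(\v x)$, whose fibre over $t$ has the form covered by the Dwork-crystal construction.

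Next I would invoke the structure of the Cartier operator on the rank-one crystal. By Proposition \ref{cartier-def} and Proposition \ref{Uf-formal-exact}, $\cartier$ descends to a map $\Omega_{f_t}^\circ/U_{f_t}^\circ \to \Omega_{f_{t^p}}^\circ/U_{f_{t^p}}^\circ$ (after the Frobenius lift $\sigma\colon t\mapsto t^p$), and in the rank-one case this map is multiplication by a single $p$-adic unit $\alpha(t)\in\Z_p\lb t\rb$, the unit-root "eigenvalue". The key identity is that $\alpha(t)$ is computed by the period ratio: on the level of the $p$-adic expansion of $\tfrac{1}{f_t}\tfrac{d\v x}{\v x}$, applying $\cartier$ and comparing constant terms yields $\alpha(t)\,F(t^p)=F(t)$ up to the error controlled by $U_{f_t}^\circ$. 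More precisely, the content of \cite{BV19I} and \cite{BV19II} is that the finite truncations of $F$ approximate $\alpha$: one gets $\alpha(t)\is F(t)_{p^s}/F(t^p)_{p^{s-1}}\mod{p^s}$ from Proposition \ref{Uf-formal-exact} applied to $\tfrac{1}{f_t}\tfrac{d\v x}{\v x}-F(t)\cdot(\text{formal unit})$, because the iterated Cartier images of a form in $U^\circ$ vanish mod $p^s$ and the truncation $F(t)_{p^s}$ is exactly what survives $s$ applications of $\cartier$ at the level of the constant-term expansion. Combining the two displays $\alpha(t)=F(t)/F(t^p)$ (in $\Z_p\lb t\rb$) and $\alpha(t)\is F(t)_{p^s}/F(t^p)_{p^{s-1}}\mod{p^s}$ gives precisely the congruence (\ref{rank1}).

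The main obstacle will be the Hasse--Witt unit-root hypothesis: Theorem \ref{MVthm} asserts the congruence unconditionally, but the crystal machinery of Section \ref{theory} requires $\det(\beta_p)$ (here the single entry $\beta_p$, the constant term of $g(\v x)^{p-1}$) to be a $p$-adic unit. One must therefore either (i) argue that for the purpose of the congruence (\ref{rank1}) the non-unit case is harmless — e.g. by showing that if $f_{p-1}\is0\mod p$ then $F(t)\is1\mod p$ more strongly forces both sides of (\ref{rank1}) to be congruent trivially, tracking the valuations through the Cartier iteration — or (ii) reduce to the unit case by a deformation/specialisation argument in the parameter $t$, using that the generic fibre of the family does satisfy the unit-root condition. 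The second delicate point is bookkeeping the Frobenius lift and the shift $s\mapsto s-1$ in the exponents of the truncations, i.e. checking that the constant-term extraction intertwines $\cartier$ with the operation $F(t)\mapsto$ (coefficient-selection of $F$ along $p\Z$), so that $s$ applications of $\cartier$ correspond exactly to the truncation level $p^s$ on the numerator and $p^{s-1}$ on the denominator; this is where the elementary expansion computations of \cite[Prop 5]{BV19I} do the real work, and I would simply cite them rather than reproduce them.
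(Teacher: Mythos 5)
The paper itself offers no proof of Theorem \ref{MVthm}: it is quoted from \cite{MV16}, and the stronger Theorem \ref{BVthm} is quoted from \cite{BV19II}. Your proposal is in effect a sketch of the \cite{BV19II} argument, and its architecture is exactly the one the paper deploys in Section \ref{matrixexample} in the rank-two case: take $f=1-tg(\v x)$ over $R=\Z_p\lb t\rb$ with Frobenius lift $t\mapsto t^p$, identify $F(t)$ as the period $\rho_0\bigl(\tfrac1f\tfrac{d\v x}{\v x}\bigr)$ by constant-term extraction, read off the $1\times1$ Cartier matrix $\lambda_p=F(t)/F(t^p)$ from $\rho_0\circ\cartier=\rho_0$, and obtain the truncated congruence from truncated period maps. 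So the route is the right one. (The parenthetical alternative $f=x_0g(\v x)$ does not work: its Newton polytope lies in the hyperplane $\{1\}\times\R^n$ and has empty interior, so $h=0$ there.)

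Two points need repair. First, the ``main obstacle'' you flag is not an obstacle. For $f=1-tg$ over $\Z_p\lb t\rb$ the Hasse--Witt entry is the coefficient of $\v x^{\v 0}$ in $(1-tg)^{p-1}=\sum_{j}\binom{p-1}{j}(-t)^jg^j$, namely $\beta_p=1-(p-1)f_1t+\cdots$, a unit of $\Z_p\lb t\rb$ because its constant term is $1$. This is precisely why the method applies unconditionally; no deformation argument or case analysis on $f_{p-1}\bmod p$ is needed (that issue would only arise upon specialising $t$ to a particular value). Second, the assertion that ``the truncation $F(t)_{p^s}$ is exactly what survives $s$ applications of $\cartier$'' is not literally true, and this is where the real work sits. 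The correct mechanism, visible in the lemma of Section \ref{matrixexample}, is to introduce truncated period maps $\rho_{0,m}(\omega)=\rho_0\bigl((1-(tg)^m)\omega\bigr)$, and to check three things: that $\rho_{0,m}\bigl(\tfrac1f\tfrac{d\v x}{\v x}\bigr)=F(t)_m$, that $\rho_{0,m}$ kills exact forms modulo $m$, and that $\rho_{0,mp^s}\is\rho_{0,mp^{s-1}}^\sigma\circ\cartier\mod{p^{s}}$. Applying this chain to $\cartier\bigl(\tfrac{d\v x}{f\v x}\bigr)\is\lambda_p\tfrac{d\v x}{f^\sigma\v x}$ modulo exact forms gives $F(t)_{mp^s}\is\lambda_p\,F(t^p)_{mp^{s-1}}\mod{p^s}$, and dividing by the truncation $F(t^p)_{mp^{s-1}}$ (invertible mod $p^s$ since its constant term is $1$) yields (\ref{rank1bis}), hence (\ref{rank1}). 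With these two corrections your outline becomes the proof of the stronger Theorem \ref{BVthm}, which implies Theorem \ref{MVthm}.
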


In \cite[(7)]{BV19II} there is a stronger result with an entirely different proof.
\begin{theorem}[Beukers-Vlasenko]\label{BVthm}
With the same notations as in Theorem \ref{MVthm} we have
\begin{equation}\label{rank1bis}
\frac{F(t)}{F(t^p)}\is \frac{F(t)_{mp^s}}{F(t^p)_{mp^{s-1}}}\mod{p^s}
\end{equation}
for all $m,s\ge1$.
\end{theorem}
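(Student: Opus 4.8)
The plan is to deduce Theorem \ref{BVthm} from the crystal machinery of Section \ref{theory} applied to the Laurent polynomial $g(\v x)$. Since the Newton polytope $\Delta$ of $g$ has the origin as its unique interior lattice point, we have $h=|\Delta^\circ\cap\Z^n|=1$, and the relevant differential form is $\omega:=\frac{1}{g(\v x)}\frac{d\v x}{\v x}$. The first step is to identify the generating series $F(t)$ with the Laurent expansion of $\omega$: expanding $1/g(\v x)$ as $\sum_{r\ge0} (1-g(\v x))^r\cdot(\text{unit})$ — or more directly using that the constant term of $g(\v x)^r$ is $f_r$ — one sees that $F$ records the ``diagonal'' coefficient data attached to $\omega$, and the origin being the only interior lattice point is exactly what makes this a rank-one situation. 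I would then invoke the theorem quoted at the end of Section \ref{theory}: assuming $\det(\beta_p)=\beta_p$ is a $p$-adic unit (the ordinary case, which one checks holds under the hypotheses, since the Hasse--Witt ``matrix'' here is $1\times1$ and congruent to $1$ modulo $p$ because the origin is the unique interior point), $\Omega_g^\circ/U_g^\circ$ is free of rank one with basis $\omega$.

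The second step is to extract the Cartier action. By Proposition \ref{cartier-def}, $\cartier(\omega)\in\hat\Omega_{g^\sigma}^\circ$, so modulo $U_{g^\sigma}^\circ$ we may write $\cartier(\omega)=\alpha(t)\,\omega^\sigma$ for a suitable scalar $\alpha$, where $\omega^\sigma=\frac{1}{g^\sigma(\v x)}\frac{d\v x}{\v x}$ and we track the dependence on the deformation parameter $t$ (here $g$ carries $t$ via $F(t)=\sum f_r t^r$; concretely one works with $g(\v x)-t^{-1}$ or an equivalent one-parameter family so that ``constant term of $g^r$'' becomes the coefficient extraction producing $F$). Comparing Laurent expansions of both sides and using the definition \eqref{cartier-on-series-def} of $\cartier$ on series, the scalar $\alpha$ is forced to be $F(t^p)/F(t)$ up to the ambiguity coming from $U$; this is the standard ``unit root'' computation. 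The key input that upgrades an approximate identity to the stated congruence is Proposition \ref{Uf-formal-exact}: an element of $\hat\Omega_g^\circ$ lies in $U_g^\circ$ iff $\cartier^s$ of it is $\is0\mod{p^s}$ for all $s$. Applying this with the error term $\cartier(\omega)-\frac{F(t^p)}{F(t)}\,\omega^\sigma$ (cleared of denominators) gives the congruence $\cartier(\omega)\is \frac{F(t)_{?}}{F(t^p)_{?}}\cdots$ with explicit truncation levels.

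The third and most delicate step is to pin down the truncation indices and obtain the \emph{strengthened} form with an arbitrary multiplier $m$. Iterating the rank-one relation $\cartier(\omega)\equiv c\,\omega^{\sigma}$ gives $\cartier^s(\omega)\equiv \big(\prod_{i=0}^{s-1}c^{\sigma^i}\big)\omega^{\sigma^s}$, and feeding this telescoping product into Proposition \ref{Uf-formal-exact} yields, after matching coefficients of $\v x^{\v 0}$ (the interior lattice point), precisely a congruence of the shape $\frac{F(t)}{F(t^p)}\is\frac{F(t)_{N}}{F(t^p)_{N/p}}\mod{p^s}$ for \emph{every} $N$ with $p^s\mid N$ — because the membership criterion in Proposition \ref{Uf-formal-exact} is a statement about \emph{all} coefficients simultaneously, not just those below $p^s$, so one is free to truncate at any multiple $mp^s$ of $p^s$. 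Writing $N=mp^s$ gives \eqref{rank1bis}. The main obstacle I anticipate is the bookkeeping that translates the coordinate-free statement $\cartier^s(\omega)\is0\mod{p^s\hat\Omega^\circ}$ into the precise truncation levels $mp^s$ and $mp^{s-1}$ on the two series $F(t)$ and $F(t^p)$ — in particular verifying that the ``$U$-ambiguity'' in $\alpha$ only affects coefficients of degree $\ge mp^{s-1}$ in $F(t^p)$ (equivalently $\ge mp^s$ in $F(t)$) and hence is invisible modulo $p^s$ after the truncation; this is where the hypothesis that the origin is the \emph{unique} interior lattice point (so that $C(\Delta-\v b)$ meets the diagonal exactly once per degree) does the real work.
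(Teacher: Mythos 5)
The paper does not reprove this theorem --- it cites it from \cite[(7)]{BV19II} --- but the machinery that actually yields the truncations $mp^s$, $mp^{s-1}$ is on display in Section \ref{matrixexample}, and your sketch is missing exactly that ingredient. Your first two steps are essentially right: set $f=1-tg(\v x)$ over $R=\Z_p[[t]]$ (please say this explicitly rather than hedging with ``$g(\v x)-t^{-1}$ or an equivalent family''), note $h=1$ so that $\cartier(\omega)\is\lambda(t)\,\omega^\sigma\mod{U_{f^\sigma}^\circ}$ for $\omega=\frac{1}{f}\frac{d\v x}{\v x}$, and identify $\lambda$ by applying the constant-term period map $\rho_0$, which kills $U$ and satisfies $\rho_0\circ\cartier=\rho_0$. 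Two small corrections there: the unit root is $\lambda=F(t)/F(t^p)$, not $F(t^p)/F(t)$ (from $F(t)=\lambda F(t^p)$), and the Hasse--Witt entry is congruent to $F(t)_p$ mod $p$, a unit because $f_0=1$, not congruent to $1$.

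The genuine gap is your third step. The claim that Proposition \ref{Uf-formal-exact}, being ``a statement about all coefficients simultaneously,'' lets you ``truncate at any multiple $mp^s$ of $p^s$'' is not an argument: Proposition \ref{Uf-formal-exact} constrains coefficients of $\v x^{\v k}$ (exactness in the $\v x$-variables), whereas the truncation in the theorem is in the deformation parameter $t$, and nothing in that proposition relates the two gradings. Likewise, the ``$U$-ambiguity'' does not live in high $t$-degrees --- the period map annihilates $U$ outright --- so the sentence about the ambiguity ``only affecting coefficients of degree $\ge mp^{s-1}$'' is not the right mechanism. What is actually needed is the truncated period map of Section \ref{matrixexample}: $\rho_{0,m}(\omega)=\rho_0\bigl(\bigl(1-t^mg(\v x)^m\bigr)\omega\bigr)$, together with the two lemmas (i) $\rho_{0,m}(d\scrO_{\rm formal}\cap\hat\Omega_f^\circ)\is0\mod{m}$ and (ii) $\rho_{0,mp^s}\is\rho_{0,mp^{s-1}}^\sigma\circ\cartier\mod{p^s}$, plus the computation $\rho_{0,m}(\omega)=F(t)_m$. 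Applying $\rho_{0,mp^s}$ to $\cartier(\omega)\is\lambda\,\omega^\sigma\mod{U}$ then gives $F(t)_{mp^s}\is\lambda\cdot F(t^p)_{mp^{s-1}}\mod{p^s}$, which is \eqref{rank1bis}. Without (i) and (ii) --- neither of which you state or prove --- your step three is an assertion of the theorem rather than a derivation of it.
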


Here is an application. 
\begin{corollary}
Let $k\ge2$ be an integer and $p$ an odd prime not dividing $k$. Then (\ref{rank1})
holds for the hypergeometric series
$${}_{k-1}F_{k-2}(\nicefrac1k,\nicefrac2k,\ldots,\nicefrac{(k-1)}{k};1,1,\ldots,1|t).$$
\end{corollary}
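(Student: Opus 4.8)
The plan is to realize the given hypergeometric series as the constant-term series $F(t)=\sum_{r\ge0}f_rt^r$ of a suitable Laurent polynomial $g(\v x)$ satisfying the hypotheses of Theorem \ref{MVthm} (equivalently Theorem \ref{BVthm}), and then simply invoke that theorem. Concretely, I would take
\[
g(x_1,\ldots,x_{k-1})=\frac{(1+x_1)(1+x_2)\cdots(1+x_{k-1})(1+x_1^{-1}+x_2^{-1}+\cdots+x_{k-1}^{-1})}{??}
\]
— but more cleanly, the standard choice is
\[
g(x_1,\ldots,x_{k-1})=(1+x_1)(1+x_2)\cdots(1+x_{k-1})\left(\frac1{x_1}+\frac1{x_2}+\cdots+\frac1{x_{k-1}}+1\right)^{-1}
\]
is not a Laurent polynomial, so instead I would use the homogeneous-looking model
\[
g(x_0,x_1,\ldots,x_{k-1})\quad\text{restricted appropriately},
\]
and the cleanest honest choice is
\[
g(\v x)=\frac{(x_1+x_2+\cdots+x_{k-1}+1)^k}{x_1x_2\cdots x_{k-1}}.
\]

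Let me be precise about this last choice. With $g(\v x)=(x_1+\cdots+x_{k-1}+1)^k/(x_1\cdots x_{k-1})$, the constant term of $g(\v x)^r$ is the coefficient of $(x_1\cdots x_{k-1})^r$ in $(x_1+\cdots+x_{k-1}+1)^{kr}$, which by the multinomial theorem equals $\binom{kr}{r,r,\ldots,r,(k-1)r\ \text{wait}}$ — precisely $\dfrac{(kr)!}{(r!)^{k-1}\,((kr-(k-1)r)!)}=\dfrac{(kr)!}{(r!)^k}$. One checks directly that $\sum_{r\ge0}\frac{(kr)!}{(r!)^k}t^r={}_{k-1}F_{k-2}(\nicefrac1k,\ldots,\nicefrac{k-1}{k};1,\ldots,1\,|\,k^kt)$, so after the harmless rescaling $t\mapsto t/k^k$ (legitimate since $p\nmid k$, so $k^k$ is a $p$-adic unit and the congruence \eqref{rank1} is stable under $t\mapsto ct$ for a unit $c$) we obtain the desired hypergeometric function. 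The three steps are therefore: (1) write down $g$ and compute its constant-term series; (2) check that the Newton polytope $\Delta$ of $g$ has the origin as its unique interior lattice point; (3) apply Theorem \ref{MVthm} and undo the rescaling.

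Step (2) is the only place requiring genuine thought, though it is still routine. The Newton polytope of $g$ is the translate by $(-1,-1,\ldots,-1)$ of $k$ times the standard simplex $\mathrm{conv}\{0,e_1,\ldots,e_{k-1}\}$ in $\R^{k-1}$; that is, $\Delta=\{(y_1,\ldots,y_{k-1}): y_i\ge -1,\ \sum y_i\le k-(k-1)=1\}$. Its interior lattice points are the integer vectors with $y_i\ge 0$ for all $i$ and $\sum y_i\le 0$, which forces $\v y=\v 0$; hence the origin is the unique interior lattice point, as required. I would also remark that $p$ odd and $p\nmid k$ guarantees we are in the setting where $g\in\Z_p[x_i^{\pm1}]$ with the relevant denominators being $p$-adic units, so nothing obstructs the application. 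The main (very mild) obstacle is purely bookkeeping: making sure the rescaling $t\mapsto k^kt$ is accounted for correctly and that the resulting series really is the quoted ${}_{k-1}F_{k-2}$ — a direct comparison of the ratio of consecutive coefficients $\frac{(k(r+1))!/( (r+1)!)^k}{(kr)!/(r!)^k}=\frac{\prod_{j=1}^{k-1}(kr+j)}{(r+1)^{k-1}}\cdot\frac{1}{1}$ against the hypergeometric recursion confirms it.
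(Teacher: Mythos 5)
Your construction is genuinely different from the paper's. The paper takes $g=k^{-k}\bigl(x_1+\cdots+x_{k-1}+\tfrac{1}{x_1\cdots x_{k-1}}\bigr)$, whose constant-term series is supported on multiples of $k$ and equals the hypergeometric series in $t^k$; it therefore needs the stronger Theorem \ref{BVthm} with $m=k$ to descend from $t^k$ to $t$. Your $g=(x_1+\cdots+x_{k-1}+1)^k/(x_1\cdots x_{k-1})$ produces the series directly in $t$ (up to scaling), so plain Theorem \ref{MVthm} suffices; your computation of the constant term $\frac{(kr)!}{(r!)^k}$, the identification with ${}_{k-1}F_{k-2}(\nicefrac1k,\ldots,\nicefrac{(k-1)}{k};1,\ldots,1\,|\,k^kt)$, and the Newton polytope argument (a translate of $k$ times the standard simplex, with $\v 0$ as sole interior lattice point) are all correct.

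The one genuine gap is your final step, the assertion that the congruence (\ref{rank1}) ``is stable under $t\mapsto ct$ for a unit $c$.'' This is not a formal substitution: replacing $t$ by $ct$ in (\ref{rank1}) for $F$ turns $F(t^p)$ into $F(c^pt^p)$, not into $F(ct^p)=G(t^p)$, so what you obtain is a congruence relating $F(ct)$ to $F(c^pt^p)$, which is not (\ref{rank1}) for $G(t)=F(ct)$. One can in fact prove the stability by passing to Dwork's coefficient-wise reformulation and using $c^{mp^{s+1}}\equiv c^{mp^{s}}\pmod{p^{s+1}}$ for units $c$, but that is an argument you would have to supply. The cleaner repair, which makes the issue disappear entirely and is exactly how the paper handles its normalization, is to put the scalar into the Laurent polynomial rather than into the variable: since $p\nmid k$, the element $k^{-k}$ lies in $\Z_p$, so $k^{-k}g$ is again a Laurent polynomial over $\Z_p$ with the same Newton polytope, and its constant-term series is precisely ${}_{k-1}F_{k-2}(\nicefrac1k,\ldots,\nicefrac{(k-1)}{k};1,\ldots,1\,|\,t)$. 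Theorem \ref{MVthm} then applies verbatim and no rescaling of $t$ is needed.
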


\begin{proof}
Consider 
$$g=k^{-k}\left(x_1+\cdots+x_{k-1}+\frac{1}{x_1\cdots x_{k-1}}\right).$$
A simple calcuation show that $f_r$ is zero if $k$ does not divide $r$ and equal to
$$k^{-kl}{kl\choose l}=\frac{(\nicefrac1k)_l}{l!}\frac{(\nicefrac2k)_l}{l!}
\cdots\frac{(\nicefrac{(k-1)}{k})_l}{l!}
$$
if $r=kl$. Hence
$$F(t)={}_{k-1}F_{k-2}(\nicefrac1k,\nicefrac2k,\ldots,\nicefrac{(k-1)}{k};1,1,\ldots,1|t^k).$$
Now apply Theorem \ref{BVthm} with $m=k$ and replace $t^k$ by $t$.
\end{proof}
Here is another variation which generalizes Dwork's example
\begin{corollary}
Let $k\ge2$ be an integer and $p$ an odd prime. Then (\ref{rank1}) holds
for the hypergeometric series
$${}_{k-1}F_{k-2}(\nicefrac12,\nicefrac12,\ldots,\nicefrac12,1,1,\ldots,1|t).$$
\end{corollary}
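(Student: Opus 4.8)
The plan is to realise $F(t)$ as the generating function of constant terms of powers of a single Laurent polynomial and then to invoke Theorem \ref{BVthm} (Theorem \ref{MVthm} would already suffice). Concretely, in the $k-1$ variables $x_1,\dots,x_{k-1}$ I would take
\[
g(\v x)\;=\;\frac{1}{4^{\,k-1}}\prod_{i=1}^{k-1}\Bigl(x_i+\frac1{x_i}+2\Bigr)
\;=\;\prod_{i=1}^{k-1}\frac{(x_i+1)^2}{4\,x_i}.
\]
Since $p$ is odd, $2$ is a unit in $\Z_p$, so $g\in\Z_p[x_1^{\pm1},\dots,x_{k-1}^{\pm1}]$. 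The Newton polytope of $x_i+x_i^{-1}+2$ in the single variable $x_i$ is the segment $[-1,1]$, hence the Newton polytope $\Delta$ of $g$ is the cube $[-1,1]^{k-1}$, whose interior $(-1,1)^{k-1}$ contains exactly one lattice point, namely the origin. Thus $g$ meets the hypothesis of Theorem \ref{MVthm}.

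Next I would compute the constant term $f_r$ of $g(\v x)^r$. As $g$ is a product over separate variables, this constant term factorises as the product over $i$ of the constant term of $\bigl((x_i+1)^2/(4x_i)\bigr)^r=(x_i+1)^{2r}/(4x_i)^r$, and that equals $\binom{2r}{r}/4^r=(\nicefrac12)_r/r!$. Therefore
\[
f_r=\left(\frac{(\nicefrac12)_r}{r!}\right)^{k-1},\qquad
F(t)=\sum_{r\ge0}f_rt^r={}_{k-1}F_{k-2}(\nicefrac12,\dots,\nicefrac12;1,\dots,1\,|\,t),
\]
where the right-hand side has $k-1$ numerator parameters $\nicefrac12$ and $k-2$ denominator parameters $1$; indeed $((\nicefrac12)_r/r!)^{k-1}=((\nicefrac12)_r)^{k-1}/\bigl((1)_r^{k-2}\,r!\bigr)$ since $(1)_r=r!$. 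Unlike in the previous corollary, no substitution $t^k\mapsto t$ is required.

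Finally, Theorem \ref{BVthm} applied to this $g$ with $m=1$ gives
\[
\frac{F(t)}{F(t^p)}\is\frac{F(t)_{p^s}}{F(t^p)_{p^{s-1}}}\mod{p^s}
\]
for every $s\ge1$, which is exactly congruence (\ref{rank1}) for the stated hypergeometric series; in fact the sharper form (\ref{rank1bis}) holds for all $m\ge1$. I do not foresee a genuine obstacle: the entire analytic content is absorbed into Theorems \ref{MVthm}/\ref{BVthm}, and the only two points to check by hand are the $\Z_p$-integrality of $g$ (which is why $p$ must be odd, precisely the hypothesis) and that the cube $[-1,1]^{k-1}$ has the origin as its unique interior lattice point. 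For $k=3$ one recovers Dwork's original case $F={}_2F_1(\nicefrac12,\nicefrac12;1\,|\,t)$.
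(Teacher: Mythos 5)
Your proof is correct, but it takes a genuinely different route from the paper's. The paper works with a $k$-variable Laurent polynomial $g=c\,(x_1+1/x_1)\cdots(x_k+1/x_k)$, whose powers have vanishing constant term in odd degree; this produces the hypergeometric series in the variable $t^2$, so the author must invoke the stronger Theorem \ref{BVthm} with $m=2$ and then substitute $t^2\mapsto t$ to land back on (\ref{rank1}). You instead use $k-1$ variables and insert the constant $2$ into each factor, $g=\prod_i(x_i+1)^2/(4x_i)$; since every power of $g$ then has nonzero constant term $\bigl(\binom{2r}{r}/4^r\bigr)^{k-1}=\bigl((\nicefrac12)_r/r!\bigr)^{k-1}$, you obtain $F(t)$ directly and only need the $m=1$ case, so Theorem \ref{MVthm} already suffices and no substitution is required. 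All the points you check by hand (integrality of $g$ for odd $p$, the cube $[-1,1]^{k-1}$ having the origin as its unique interior lattice point, the identity $\binom{2r}{r}/4^r=(\nicefrac12)_r/r!$) are correct. A noteworthy by-product: your construction yields the exponent $k-1$, hence literally the ${}_{k-1}F_{k-2}$ with $k-1$ numerator parameters $\nicefrac12$ as stated in the corollary (Dwork's case being $k=3$), whereas the paper's computation gives $\bigl((\nicefrac12)_l/l!\bigr)^{k}$, which is really a ${}_kF_{k-1}$ (Dwork's case being $k=2$); the two constructions cover the same family of series with indices shifted by one, and yours matches the statement as written more closely. Both approaches are valid; the paper's has the virtue of running in exact parallel with the preceding corollary, while yours is more economical in that it avoids the dilation $t\mapsto t^2$ and the appeal to the refined congruence (\ref{rank1bis}).
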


\begin{proof}
Consider
$$g=4^{-k}\left(x_1+\frac{1}{x_1}\right)\cdots\left(x_k+\frac{1}{x_k}\right).$$
A simple calculation shows that $f_r$ is zero if $r$ is odd and equal to
$$\left(\frac{(\nicefrac12)_l}{l!}\right)^k$$
if $r=2l$. Hence
$$F(t)={}_{k-1}F_{k-2}(\nicefrac12,\ldots,\nicefrac12;1,1,\ldots,1|t^2).$$
Now apply Theorem \ref{BVthm} with $m=2$ and replace $t^2$ by $t$.
\end{proof}

Having seen the above examples one can mix the ideas by taking combinations for
$g$, for example $g=(x+1/x)(y_1+y_2+1/y_1y_2)$ yielding 
$${}_3F_2(\nicefrac12,\nicefrac13,\nicefrac23,1,1|t).$$

\section{One variable polynomials}
Let again $R$ be a characteristic zero ring, $p$ an odd prime such that
$\cap_{s\ge1} p^sR=\{0\}$ and suppose $R$ is $p$-adically complete.
Let $\sigma:R\to R$ be a Frobenius lift.
It turns out that in the case of one variable polynomials $f$ the theory
sketched in Section \ref{theory} has a very
nice simplication that we like to present for general monic $f\in R[x]$
with $f(0)\ne0$. Let $d$ be
the degree of $f$. We suppose that $d\ge2$ and that the discriminant
of $f$ is invertible in $R$. 

The space $\Omega_f^\circ$ is given by $\scrO_f^\circ dx$ where
$\scrO_f^\circ$ is the $R$-module generated by the forms $l!\frac{x^k}{f^{l+1}}$
with $0\le k\le d(l+1)-2$. Similarly we define $\scrO_f$ in the same way but with the
inequalities $0\le k\le d(l+1)-1$. The exact forms in $\Omega_f^\circ$ are then
given by $d\scrO_f$. We call them {\it rational exact forms}.

We define $\scrO_{\rm formal}=\frac{1}{x}R[[1/x]]$ and 
$\Omega_{\rm formal}=\frac{1}{x}\scrO_{\rm formal}dx$.
We embed $\Omega_f^\circ$ in $\Omega_{\rm formal}$ by expansion in powers of $1/x$. 
The {\it formally exact forms} are defined by $d\scrO_{\rm formal}$.

The interior of the Newton polytope is $\Delta^\circ=(0,d)$ and the cardinality of
$\Delta^\circ\cap\Z$ is $d-1$. So, letting $p$ be an odd prime,
the Hasse-Witt matrix $\beta_p(t)$ is a $(d-1)\times (d-1)$-matrix. It turns out that
$\det(\beta_p)\is {\rm disc}(f)^{p-1}\mod{p}$, where
${\rm disc}(f)$ is the discriminant of $f$.
By $p$-adic completeness of $R$ and invertibility of ${\rm disc}(f)$ in $R$
we find that $\det(\beta_p)$ is invertible in $R$.
According to Theorem 9 in
Dwork crystals I, \cite{BV19I}, we know that $\hat\Omega_f^\circ/d\scrO_{\rm formal}$
is a free rank $d-1$ module over $R$ with basis $dx/f,xdx/f,\ldots,x^{d-2}dx/f$.
 
It turns out that in the case $n=1$ formally exact
forms coincide with rational exact forms. More precisely,

\begin{proposition}\label{formal2rational}
Let $f\in R[x]$ be a monic polynomial and suppose that its discriminant is invertible
in $R$. Then $\Omega_f^\circ\cap d\scrO_{\rm formal}= d\scrO_f$. 
\end{proposition}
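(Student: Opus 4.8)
The plan is to prove the two inclusions separately. The inclusion $d\scrO_f\subseteq\Omega_f^\circ\cap d\scrO_{\rm formal}$ is a direct verification: differentiating a generator of $\scrO_f$ gives, for $0\le k\le d(l+1)-1$,
\[
d\!\left(\frac{l!\,x^k}{f^{l+1}}\right)=\frac{l!\,k\,x^{k-1}}{f^{l+1}}\,dx\,-\,\frac{(l+1)!\,x^k f'}{f^{l+2}}\,dx ,
\]
and since $k-1\le d(l+1)-2$ and $\deg(x^kf')\le d(l+2)-2$, both summands lie in $\Omega_f^\circ=\scrO_f^\circ\,dx$; moreover $\scrO_f\subseteq\scrO_{\rm formal}$, because each generator $l!\,x^k/f^{l+1}$ with $0\le k\le d(l+1)-1$ expands in powers of $1/x$ as an element of $\frac1x R[[1/x]]$, so $d\scrO_f\subseteq d\scrO_{\rm formal}$. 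Hence $d\scrO_f\subseteq\Omega_f^\circ\cap d\scrO_{\rm formal}$.

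For the reverse inclusion the main step is a Griffiths--Dwork pole reduction. I claim first that every $\omega\in\scrO_f\,dx$ can be written $\omega=d\phi+\Pi(x)\,dx+\frac{Q(x)}{f}\,dx$ with $\phi\in\scrO_f$, $\Pi\in R[x]$ and $\deg Q\le d-1$. Since ${\rm disc}(f)\in R^\times$, B\'ezout in $R[x]$ furnishes $a,b\in R[x]$ with $af+bf'=1$, $\deg a\le d-2$, $\deg b\le d-1$. Arguing by induction on the pole order, I take a term $(m-1)!\,P/f^m\,dx$ with $m\ge2$, write $P=afP+bf'P$, reduce $bP$ modulo $f$ (polynomial division by the monic $f$ introduces no denominators), and integrate the resulting $f'$-part by parts using
\[
\frac{(m-1)!\,x^k f'}{f^{m}}\,dx=\frac{(m-2)!\,k\,x^{k-1}}{f^{m-1}}\,dx-d\!\left(\frac{(m-2)!\,x^k}{f^{m-1}}\right),\qquad 0\le k\le d(m-1)-1 .
\]
The essential point is that the factor $m-1$ produced by integration by parts is cancelled by $(m-1)!/(m-1)=(m-2)!$, so no denominators occur and the primitive stays in $\scrO_f$; repeatedly reducing the remaining numerators modulo $f$ then lowers all pole orders and yields the asserted normal form, the polynomial $\Pi$ collecting the pole-order-zero remainders.

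To finish, I compare Laurent expansions at $x=\infty$. Every element of $\Omega_f^\circ$ and every $d\phi$ with $\phi\in\scrO_f$ is $O(x^{-2})\,dx$, whereas in the normal form $\Pi\,dx$ carries all non-negative powers of $x$ and $\frac{Q}{f}\,dx$ contributes an $x^{-1}\,dx$ term exactly when $\deg Q=d-1$; hence for $\omega\in\Omega_f^\circ$ one gets $\Pi=0$ and $\deg Q\le d-2$. Now assume in addition $\omega\in d\scrO_{\rm formal}$. Then $\frac{Q}{f}\,dx=\omega-d\phi$ is formally exact, since $\phi\in\scrO_f\subseteq\scrO_{\rm formal}$, so its class in $\hat\Omega_f^\circ/d\scrO_{\rm formal}$ vanishes. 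By Theorem~9 of \cite{BV19I} the classes of $dx/f,xdx/f,\ldots,x^{d-2}dx/f$ form an $R$-basis of $\hat\Omega_f^\circ/d\scrO_{\rm formal}$, hence are $R$-linearly independent there; therefore $Q=0$ and $\omega=d\phi\in d\scrO_f$.

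I expect the pole reduction to be the only genuine obstacle. One has to run the reduction carefully enough to confirm that no denominators are introduced — which is exactly what the normalising factorials in the definitions of $\scrO_f^\circ$ and $\scrO_f$ achieve — and that the successive polynomial divisions keep all degrees controlled because $f$ is monic. The rest is bookkeeping with Laurent expansions at infinity, together with the rank statement already proved in \cite{BV19I}.
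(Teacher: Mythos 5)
Your proof is correct, but its second half takes a genuinely different route from the paper's. The reduction to a normal form $Q\,dx/f$ with $\deg Q\le d-2$ modulo $d\scrO_f$ is essentially the paper's first step (the paper writes $Q=Af'+Bf$ with controlled degrees and iterates; your variant with the auxiliary polynomial part $\Pi$, killed afterwards by comparing orders of vanishing at infinity, is equivalent bookkeeping, and you correctly isolate the two points that make it work: the factorial normalisation absorbs the factor $m-1$ coming from integration by parts, and invertibility of the discriminant supplies the B\'ezout identity $af+bf'=1$ with the right degree bounds). Where you diverge is the injectivity step. You conclude $Q=0$ by invoking the freeness theorem of \cite{BV19I} quoted in Section 2, i.e.\ the linear independence of the classes of $dx/f,\dots,x^{d-2}dx/f$ in $\hat\Omega_f^\circ/U_f^\circ$, which applies here because $\det(\beta_p)\equiv\mathrm{disc}(f)^{p-1}\pmod p$ is a unit; this is legitimate and not circular, since that theorem concerns only the quotient by $U_f^\circ$ and makes no reference to $d\scrO_f$. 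The paper instead gives a self-contained elementary argument: it passes to the splitting field, writes $\mathrm{disc}(f)\,Q\,dx/f=\sum_i A_i\,dx/(x-\alpha_i)$, uses the characterization of formally exact series (the coefficient of $dx/x^{mp^s+1}$ must be divisible by $p^s$) to obtain $p^s\mid A_1\alpha_1^{mp^s}+\cdots+A_d\alpha_d^{mp^s}$ for $m=0,\dots,d-1$, and inverts the matrix $(\alpha_i^{mp^s})$, whose determinant is $\equiv\mathrm{disc}(f)^{p^s}\pmod p$, to force $A_i\equiv0\pmod{p^s}$ for all $s$, hence $A_i=0$. Your route is shorter but leans on the Hasse--Witt machinery of \cite{BV19I}; the paper's is longer but independent of that theorem and makes explicit where the invertibility of the discriminant enters the $p$-adic estimate.
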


\begin{proof}
Clearly $d\scrO_f\subset d\scrO_{\rm formal}$. We first show that every
$\omega\in\Omega_f^\circ$ is equivalent modulo $d\scrO_f$ to a form
$Q(x)dx/f$ with $Q(x)\in R[x]$ of degree
$\le d-2$. To that end we use the one variable version of Griffiths's reduction procedure.
Since $p$ does not divide ${\rm disc}(f)$, to every $Q(x)\in R[x]$ of degree $\le N$
there exist
polynomials $A,B\in R[x]$ of degrees $\le d-1$ and $\le\max(d-2,N-d)$ respectively,
such that $Q=Af'+Bf$. 

Let us start with a form $l!Q(x)dx/f^{l+1}$ with $\deg(Q)\le (l+1)d-2$ and $l>0$.
Write $Q=Af'+Bf$ with $\deg(A)\le d-1, \deg(B)\le ld-2$. Then we obtain
\begin{eqnarray*}
l!\frac{Q(x)}{f^{l+1}}dx&=&l!\frac{Af'}{f^{l+1}}dx+l!\frac{B}{f^l}dx\\
&=&d\left((l-1)!\frac{A}{f^l}\right)-(l-1)!\frac{A'}{f^l}dx+l!\frac{B}{f^l}dx\\
&\is&(l-1)!\frac{lB-A'}{f^l}dx.
\end{eqnarray*}
Note that $\deg(lB-A')\le ld-2$. By repeating this procedure we see that any
$\omega\in\Omega_f^\circ$
is equivalent modulo $d\scrO_f$ to a form $Qdx/f$ with $Q\in R[x]$ of degree
$\le d-2$.

The second part of our proof consists of showing that $Qdx/f\in d\scrO_{\rm formal}$ implies
that $Q=0$. Suppose that 
$$\frac{Qdx}{f}=d\left(\sum_{n\ge0}\frac{a_n}{x^n}\right)=
\sum_{n\ge1}-\frac{na_n}{x^{n+1}}dx.$$
From this we see that the coefficient of $dx/x^{mp^s+1}$ in the $1/x$-expansion of $Qdx/f$
is divisible by $p^s$ for any $m,s\ge0$. Let $K$ be the splitting field of $f$ over
$R$ and let $\alpha_1,\ldots,\alpha_d\in K$ be the zeros of $f$. Then there exist 
$A_1,\ldots,A_d$ in $R[\alpha_1,\ldots,\alpha_d]$ such that
$${\rm disc}(f)\frac{Qdx}{f}=\sum_{i=1}^d\frac{A_idx}{x-\alpha_i}=
\sum_{n\ge0}(A_1\alpha_1^n+\cdots+A_d\alpha_d^n)\frac{dx}{x^{n+1}}.$$
We now know that $A_1\alpha_1^{mp^s}+\cdots+A_d\alpha_d^{mp^s}$ is divisible by $p^s$
for all $m\ge0$. In particular for $m=0,1,\ldots,d-1$. Now note that
\begin{eqnarray*}
\det((\alpha_i^{mp^s})_{i=1,\ldots,d;m=0,\ldots,d-1})
&=&\prod_{i<j}(\alpha_i^{p^s}-\alpha_j^{p^s})\\
&\is& \prod_{i<j}(\alpha_i-\alpha_j)^{p^s}\is {\rm disc}(f)^{p^s}\mod{p},
\end{eqnarray*}
which is a unit in $R$. We conclude that $A_i\is0\mod{p^s}$ for all $i$ and $s$.
Hence $A_i=0$ for all $i$ and we conclude $Q(x)=0$, as asserted.
\end{proof}

An immediate corollary is its extension to $p$-adic completions. Denote
$\hat\Omega_f^\circ$ as before and similarly $\hat\scrO_f$. Then we find,

\begin{proposition}\label{formal2rationalhat}
Let $f\in R[x]$ be a monic polynomial and suppose that its discriminant is invertible
in $R$. Then $U_f^\circ=\hat\Omega_f^\circ\cap d\scrO_{\rm formal}=d\hat\scrO_f$. 
\end{proposition}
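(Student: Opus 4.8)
Since the first equality $U_f^\circ=\hat\Omega_f^\circ\cap d\scrO_{\rm formal}$ holds by the very definition of $U_f^\circ$ in the one variable setting (where the formally exact forms are precisely the elements of $d\scrO_{\rm formal}$; equivalently it follows from Proposition \ref{Uf-formal-exact} together with Lemma \ref{exactcondition2}), the real content is the equality $U_f^\circ=d\hat\scrO_f$. The plan is to transport the two halves of the proof of Proposition \ref{formal2rational} through the $p$-adic completion; the only genuinely new point will be that the Griffiths reduction is $p$-adically continuous.

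The inclusion $d\hat\scrO_f\subseteq U_f^\circ$ is obtained by completing the inclusion $d\scrO_f\subseteq\Omega_f^\circ\cap d\scrO_{\rm formal}$ supplied by Proposition \ref{formal2rational}: every element of $d\hat\scrO_f$ is a $p$-adic limit of elements of $d\scrO_f\subseteq\hat\Omega_f^\circ$, and $d\scrO_{\rm formal}$ is $p$-adically closed by Lemma \ref{exactcondition}, so the limit again lies in $\hat\Omega_f^\circ\cap d\scrO_{\rm formal}=U_f^\circ$.

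For the reverse inclusion, the key observation is that the reduction procedure in the first half of the proof of Proposition \ref{formal2rational} is $R$-linear and introduces no denominators: the decomposition $Q=Af'+Bf$ exists over $R$, with $A,B$ uniquely determined and $R$-linear in $Q$, precisely because $\mathrm{disc}(f)$ is a unit in $R$; and the step $l!\frac{Q}{f^{l+1}}dx\mapsto(l-1)!\frac{lB-A'}{f^l}dx$, together with the antiderivative $(l-1)!\frac{A}{f^l}\in\scrO_f$ it produces, is $R$-linear in $Q$. Iterating, one obtains $R$-linear maps $\rho\colon\Omega_f^\circ\to\bigoplus_{i=0}^{d-2}R\cdot\tfrac{x^i}{f}dx$ and $\delta\colon\Omega_f^\circ\to\scrO_f$ satisfying $\omega=d(\delta\omega)+\rho(\omega)$, and each of them carries $p^s\Omega_f^\circ$ into $p^s$ times its target. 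Hence both extend to continuous $R$-linear maps $\hat\rho\colon\hat\Omega_f^\circ\to\bigoplus_{i=0}^{d-2}R\cdot\tfrac{x^i}{f}dx$ and $\hat\delta\colon\hat\Omega_f^\circ\to\hat\scrO_f$, and by continuity of $d$ we still have $\omega=d(\hat\delta\omega)+\hat\rho(\omega)$ for every $\omega\in\hat\Omega_f^\circ$; in particular $\omega\is\hat\rho(\omega)\mod{d\hat\scrO_f}$.

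Finally, let $\omega\in U_f^\circ$. Then $d(\hat\delta\omega)\in d\hat\scrO_f\subseteq U_f^\circ\subseteq d\scrO_{\rm formal}$, so $\hat\rho(\omega)=\omega-d(\hat\delta\omega)\in d\scrO_{\rm formal}$. Write $\hat\rho(\omega)=\frac{Q(x)}{f}dx$ with $Q\in R[x]$ of degree $\le d-2$. The second half of the proof of Proposition \ref{formal2rational} now applies verbatim — it used only that $Q\in R[x]$ and that $Qdx/f$ lies in $d\scrO_{\rm formal}$ — and, via the partial fraction decomposition of $\mathrm{disc}(f)\,Qdx/f$ over the splitting field of $f$ and the Vandermonde determinant being a $p$-adic unit, it forces $Q=0$. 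Hence $\omega=d(\hat\delta\omega)\in d\hat\scrO_f$, so $U_f^\circ\subseteq d\hat\scrO_f$, and the proof is complete. I expect the middle paragraph — continuity of Griffiths reduction, i.e.\ that it introduces no denominators and therefore descends to the completions — to be the main obstacle; the rest is a direct transcription of Proposition \ref{formal2rational}.
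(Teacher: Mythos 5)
Your proposal is correct and is essentially the paper's own argument: the paper offers no proof beyond calling the statement an immediate corollary of Proposition \ref{formal2rational} obtained by passing to $p$-adic completions, and you supply exactly the missing details, namely that the Griffiths reduction is $R$-linear with no denominators (hence continuous, so it descends to $\hat\Omega_f^\circ$) and that the injectivity argument for $Q\,dx/f\in d\scrO_{\rm formal}$ applies verbatim for $Q\in R[x]$. The only point worth tightening is that your maps $\rho$ and $\delta$ are a priori defined on chosen representations $\sum_l l!\,Q_l\,dx/f^{l+1}$ rather than on $\Omega_f^\circ$ itself (the generators of $\Omega_f^\circ$ satisfy relations); this is harmless, since one can instead write $\omega\in\hat\Omega_f^\circ$ as a convergent series $\sum_{n\ge0}p^n\eta_n$ with $\eta_n\in\Omega_f^\circ$ and reduce term by term, which yields the same conclusion $\omega\is Q\,dx/f\mod{d\hat\scrO_f}$ with $\deg Q\le d-2$.
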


The operator $\cartier$ is essentially a
lift of a Cartier operator which is only well-defined in characteristic $p$.
In \cite{BV19I} and \cite{BV19II} it sufficed to use only the operator $\cartier$ defined
above. However, as a new ingredient, we need to consider other lifts.
Let $a\in\Z_p$. Define $\cartier^a$ as the operator 
with the property that $\cartier^a((x-a)^{k-1}dx)=(x-a)^{k/p-1}dx$ if $p$ 
divides $k$ and $0$ if not. In general it acts on rational differential forms as
$$\cartier^a\left(S(x)\frac{dx}{x}\right)=\sum_{y:(y-a)^p=x-a}S(y)\frac{dy}{y}.$$
So we sum over $y=a+\zeta(x-a)^{1/p}$ where $\zeta$ runs over the $p$-th roots of unity.
We can compare $\cartier$ and $\cartier^a$ by looking at their action on 
$\Omega_{\rm formal}$. 

\begin{proposition}
We have $\cartier^a(\Omega_f^\circ)\subset \hat\Omega_{f^\sigma}^\circ$ and 
\begin{equation}\label{lift-invariance}
\cartier(\omega)\is\cartier^a(\omega)\mod{pd\hat\scrO_{f^\sigma}}
\end{equation}
for all $\omega\in\Omega_f^\circ$.
\end{proposition}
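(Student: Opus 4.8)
The plan is to work on the level of formal Laurent series in $1/x$ and compare the two Cartier-type operators term by term. First I would establish the inclusion $\cartier^a(\Omega_f^\circ)\subset\hat\Omega_{f^\sigma}^\circ$. The cleanest route is to reduce it to the already-known inclusion $\cartier(\Omega_f^\circ)\subset\hat\Omega_{f^\sigma}^\circ$ of Proposition~\ref{cartier-def}: writing $x=(x-a)+a$, one expands $\cartier^a$ in terms of $\cartier$ via the substitution $y\mapsto y+a$, or alternatively one observes that $\cartier^a=\tau_a^{-1}\circ\cartier\circ\tau_a$ where $\tau_a$ is translation by $a$, together with the fact that translation sends $\Omega_f^\circ$ to $\Omega_{f(x+a)}^\circ$ and respects the completion. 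Since $a\in\Z_p$, the Frobenius lift interacts with the translation in a controlled way ($\sigma(a)\equiv a^p$), so the target $\hat\Omega_{f^\sigma}^\circ$ is preserved up to the $p$-adic completion. This part is essentially formal bookkeeping once one has Proposition~\ref{cartier-def}.

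For the congruence \eqref{lift-invariance}, the idea is to test it against the characterization of $d\hat\scrO_{f^\sigma}$ coming from Lemma~\ref{exactcondition} (equivalently Lemma~\ref{exactcondition2} and Proposition~\ref{formal2rationalhat}): a form in $\hat\Omega_{f^\sigma}^\circ$ lies in $p\,d\hat\scrO_{f^\sigma}$ if and only if, after $1/x$-expansion $\sum_{n\ge1}c_n\,x^{-n-1}dx$, one has $c_n\equiv0\bmod p^{1+\ord_p(n)}$ for all $n$. So I would compute the $1/x$-expansions of $\cartier(\omega)$ and $\cartier^a(\omega)$ and show their difference satisfies this divisibility. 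Both operators, applied to $S(x)\,dx/x$, are sums over $p$-th roots: $\cartier$ sums $S(y)\,dy/y$ over $y^p=x$, while $\cartier^a$ sums $S(a+\zeta(x-a)^{1/p})$ over the $p$-th roots of unity $\zeta$. The key elementary fact is that for any power series, $\sum_{\zeta^p=1}G(\zeta w)$ picks out exactly the coefficients of $w^{pk}$ times $p$; so the $1/x$-expansion coefficients of both $\cartier(\omega)$ and $\cartier^a(\omega)$ are, up to the extraction of every $p$-th coefficient, the coefficients of the $1/x$-expansion of $\omega$ itself — but expanded around two different points ($\infty$ after $x\mapsto x^{1/p}$, versus $\infty$ after $x-a\mapsto (x-a)^{1/p}$). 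The difference between "extract every $p$-th coefficient of the expansion at $\infty$" and "extract every $p$-th coefficient of the shifted expansion" is what must be shown to be divisible by $p^{1+\ord_p(n)}$.

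The concrete mechanism I expect to use is the binomial expansion $(x-a)^{-m}=\sum_{j\ge0}\binom{-m}{j}(-a)^j x^{-m-j}$ and its inverse, combined with the observation that the operators $\cartier$ and $\cartier^a$ differ precisely by the commutator with these shift operators. One then needs: (i) $\binom{pk}{j}a^j$ contributes a factor of $p$ beyond what is already present whenever $j$ is not itself a multiple of $p$ — more precisely, a Kummer/Legendre valuation estimate $\ord_p\binom{pk}{j}\ge\ord_p(pk)-\ord_p(j)$ for $0<j<pk$; and (ii) the "diagonal" terms where $j$ is a multiple of $p$ reproduce, by induction on $s$ in the style of Proposition~\ref{Uf-formal-exact}, the congruence one level down, so they contribute to $d\hat\scrO_{f^\sigma}$ only (without the extra $p$) but combine with the $\sigma$-twist $\sigma(a)\equiv a^p\bmod p$ to land in $p\,d\hat\scrO_{f^\sigma}$ after all. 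The main obstacle is organizing this last point cleanly: one must track simultaneously the $p$-power from the binomial coefficients, the $p$-power one is allowed to discard because the target is the exact forms $d\hat\scrO_{f^\sigma}$ (not $0$), and the discrepancy $\sigma(a)-a^p$, and show these balance to give exactly $p\,d\hat\scrO_{f^\sigma}$ and not merely $d\hat\scrO_{f^\sigma}$. I would handle this by an induction on $s$, assuming the congruence modulo $p^s d\hat\scrO_{f^\sigma}$ and bootstrapping, exactly mirroring how Proposition~\ref{Uf-formal-exact} is used to promote a single congruence to all $s$; the base case $s=1$ is the computation that $\cartier$ and $\cartier^a$ agree modulo $p$ on the nose, which follows from $\binom{pk}{j}\equiv0\bmod p$ for $0<j<pk$ together with $(x-a)^{1/p}\equiv x^{1/p}-a^{1/p}$-type identities in characteristic $p$.
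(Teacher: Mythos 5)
Your treatment of the inclusion $\cartier^a(\Omega_f^\circ)\subset\hat\Omega_{f^\sigma}^\circ$ (conjugating by the translation $x\mapsto x+a$ and reusing Proposition~\ref{cartier-def}) is fine and is essentially what the paper does. The congruence \eqref{lift-invariance}, however, is where your plan has genuine gaps. First, the term-by-term coefficient comparison you propose does not close as described: writing $\omega_k=(x-a)^{-k-1}dx$ with $p\mid k$ and expanding in $1/x$, matching the coefficient of $x^{-m-1}dx$ in $\cartier(\omega_k)$ against that of $\cartier^a(\omega_k)=(x-a)^{-k/p-1}dx$ requires something like $\binom{pm}{pm-k}a^{pm-k}\equiv\binom{m}{m-k/p}a^{m-k/p}\pmod{p^{1+\ord_p(m)}}$, and already the powers of $a$ do not match to that precision for general $a\in\Z_p$ (e.g.\ $a=1+p$ has $a^{p\ell}\not\equiv a^{\ell}\bmod{p^2}$); the individual terms are simply not congruent, only the forms modulo exact forms are. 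Relatedly, your base-case claim that $\binom{pk}{j}\equiv0\bmod p$ for all $0<j<pk$ is false (it fails whenever $p\mid j$, by Lucas), and the ``discrepancy $\sigma(a)-a^p$'' is a red herring since $a\in\Z_p$ and the only fact needed is $a^p\equiv a\bmod p$. The paper sidesteps all of this by never comparing expansion coefficients: it proves the statement on the forms $\omega_k=(x-a)^{-k-1}dx$ themselves, on which $\cartier^a$ acts by the trivial rule ($0$ if $p\nmid k$, else $\omega_{k/p}$), and computes $\cartier(\omega_k)$ using only two facts: $\omega_k\in d\scrO_{\rm formal}$ when $p\nmid k$ together with $\cartier(d\scrO_{\rm formal})\subset p\,d\scrO_{\rm formal}$, and, when $p\mid k$, the explicit exact form $(x-a)^{-k-1}dx-(x^p-a)^{-k/p-1}x^{p-1}dx=-d\bigl(\tfrac1k[(x-a)^{-k}-(x^p-a)^{-k/p}]\bigr)$ whose primitive lies in $\scrO_{\rm formal}$ because $(x-a)^p\equiv x^p-a\bmod p$. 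That is the organizing idea your sketch is missing, and it replaces your unresolved ``induction on $s$'' (which in any case does not match the statement: the congruence is modulo $p\,d\hat\scrO_{f^\sigma}$ once, not modulo $p^s$ of anything).

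Second, your membership criterion for $p\,d\hat\scrO_{f^\sigma}$ is only half-justified: Lemma~\ref{exactcondition} gives that the difference lies in $p\,d\scrO_{\rm formal}$, but to conclude it lies in $p\,d\hat\scrO_{f^\sigma}$ you must also show that $\tfrac1p(\cartier(\omega)-\cartier^a(\omega))$ lies in $\hat\Omega_{f^\sigma}^\circ$ before you can invoke $\hat\Omega_{f^\sigma}^\circ\cap d\scrO_{\rm formal}=d\hat\scrO_{f^\sigma}$ (Proposition~\ref{formal2rationalhat}). That divisibility inside $\hat\Omega_{f^\sigma}^\circ$ is not automatic from divisibility of the formal expansion; the paper supplies it by a separate argument, writing $\cartier^a(\omega)=A(a,\omega)/f^\sigma+p\omega_1$ (Proposition~7 of \cite{BV19I}) and deducing $A(a,\omega)\equiv A(0,\omega)\bmod p$. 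You would need this step, or a substitute for it, to finish.
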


\begin{proof}
The fact that the image of $\cartier^a$ lies in $\hat\Omega_{f^\sigma}^\circ$ follows
along the same lines as in the proof of \cite[Prop 5]{BV19I}. Clearly we have
$R[[1/x]]\cong R[[1/(x-a)]]$ through the expansion $\frac{1}{x-a}=
\sum_{n\ge0}\frac{a^n}{x^{n+1}}$. 
Let us prove our second assertion for $\omega_k=(x-a)^{-k-1}dx$ for $k\ge1$.
The full statement then follows by linearity.

Observe that
$$\omega_k=(x-a)^{-k-1}dx=-d\left(\frac1k(x-a)^{-k}\right).$$
If $k$ is not divisible by $p$ then clearly $\omega_k\in d\scrO_{\rm formal}$.
Since $\cartier(d\scrO_{\rm formal})\subset pd\scrO_{\rm formal}$ we get that 
$\cartier(\omega_k)\is 0\mod{pd\scrO_{\rm formal}}$. We have trivially
$\cartier^a(\omega_k)=0$. This proves our statement for $k$ not divisible by $p$.
Suppose now that $p$ divides $k$. Then 
$$\frac{1}{k}(x-a)^{-k}\is\frac{1}k(x^p-a)^{-k/p}\mod{\scrO_{\rm formal}}$$
hence, after taking differentials,
$$(x-a)^{-k-1}dx\is (x^p-a)^{-k/p-1}x^{p-1}dx\mod{d\scrO_{\rm formal}}.$$
Application of $\cartier$ gives 
$\cartier(\omega_k)\is\omega_{k/p}\mod{pd\scrO_{\rm formal}}$. Note that
$\omega_{k/p}=\cartier^a(\omega_k)$ when $p$ divides $k$. Thus we conclude that
$$\cartier(\omega_k)\is \cartier^a(\omega_k)\mod{pd\scrO_{\rm formal}}.$$
By linearity this congruence holds for all $\omega\in\Omega_f^\circ$.

It remains to see that we can replace $pd\scrO_{\rm formal}$ by $pd\hat\scrO_f.$
From Proposition 7 in \cite{BV19I} it follows that to any $\omega\in\hat\Omega_f^\circ$
there exists $\omega_1\in\hat\Omega_{f^\sigma}^\circ$ and a polynomial $A(a,\omega)$ such that
$\cartier^a(\omega)=\frac{A(a,\omega)}{f^\sigma}+p\omega_1$. Since 
$\cartier^a(\omega)-\cartier^0(\omega)\in pd\scrO_{\rm formal}$ it follows that
$A(a,\omega)-A(0,\omega)$ is divisible by $p$. Hence 
$$\frac{1}{p}(\cartier^a(\omega)-\cartier^0(\omega))\in \hat\Omega_{f^\sigma}^\circ
\cap d\scrO_{\rm formal}=d\hat\scrO_{f^\sigma}.$$
The latter equality follows from Proposition \ref{formal2rationalhat}.
\end{proof}

\section{A matrix example}\label{matrixexample}
The examples in the Section \ref{MUM} are all related to the case $h=1$, one
interior lattice point of the Newton polytope $\Delta$.
In this section we consider an example of rank $h=2$. 

\begin{theorem}\label{main}
Let
$$\scrY(t)=\begin{pmatrix}
F(\nicefrac13,\nicefrac23,\nicefrac12|t^2)&
-\frac1{3}tF(\nicefrac76,\nicefrac56,\nicefrac32|t^2)\\
-\frac{2}{3}tF(\nicefrac23,\nicefrac43,\nicefrac32|t^2)&
F(\nicefrac16,\nicefrac56,\nicefrac12|t^2)
\end{pmatrix}.$$
Denote by $\scrY_m(t)$ the $m$-th truncated version of $\scrY(t)$, i.e. we
drop all term starting with $t^m$. Then, for all primes $p>3$ and all $m,s\ge1$
we have
$$\scrY_{mp^s}(t)\begin{pmatrix}\epsilon_p & 0\\0&1\end{pmatrix}\scrY_{mp^{s-1}}(t^p)^{-1}\is
\scrY(t)\begin{pmatrix}\epsilon_p & 0\\0&1\end{pmatrix}\scrY(t^p)^{-1}\mod{p^s}.
$$
Here $\epsilon_p=1$ if $3$ is a square modulo $p$ and $-1$ if not. 
\end{theorem}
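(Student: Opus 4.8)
The plan is to realize the matrix $\scrY(t)$ as a period matrix for a suitable one-variable family of polynomials $f_t(x)$, so that the conjugation by $\scrY$ of the constant matrix $\mathrm{diag}(\epsilon_p,1)$ becomes the matrix of the Cartier operator $\cartier$ acting on $\hat\Omega_{f_t}^\circ/U_{f_t}^\circ$ with respect to the basis $dx/f_t, x\,dx/f_t$ coming from the theorem in Section~\ref{theory}. The candidate family should be the Legendre-type cubic or quartic whose periods are the ${}_2F_1$'s appearing in the entries; the column vectors of $\scrY(t)$ should be solutions of the associated Picard--Fuchs system, and the entries of the second row are (up to the $-\tfrac{t}{3},-\tfrac{2t}{3}$ normalisation) derivatives/companions of the first row, consistent with $\scrY$ solving a rank-$2$ first-order ODE $t\,\scrY' = \scrY\,B(t)$ for some matrix $B$. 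First I would pin down this $f_t$ explicitly, verify that $h=|\Delta^\circ\cap\Z|=2$, check that $\mathrm{disc}(f_t)$ is a unit in $R=\Z_p\lb t\rb$ for $p>3$, and identify the columns of $\scrY(t)$ with a basis of the local solution space at $t=0$ of the Gauss--Manin connection, so that $\scrY(t)\scrY(t^p)^{-1}$ is literally the change-of-basis from the $t^p$-frame to the $t$-frame.

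Next I would bring in the Frobenius structure. By Proposition~\ref{cartier-def} and the rank-$h$ theorem, $\cartier$ induces an $R$-linear map $\hat\Omega_{f_t}^\circ/U_{f_t}^\circ \to \hat\Omega_{f_t^\sigma}^\circ/U_{f_t^\sigma}^\circ$ which in the chosen bases is represented by an explicit matrix; call it $\alpha(t)$. The "horizontality" of $\cartier$ with respect to the connection (a standard consequence of the fact that $\cartier$ commutes with $d$ and that formal expansion is connection-compatible, as used throughout \cite{BV19I,BV19II}) gives the key identity
\[
\alpha(t)\;=\;\scrY(t^p)\,C\,\scrY(t)^{-1}
\]
for a \emph{constant} matrix $C$, where $\sigma$ is the Frobenius lift $t\mapsto t^p$. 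Evaluating at $t=0$ identifies $C$ with the Hasse--Witt matrix $\beta_p$ (mod $p$, and then its unit-root lift), and a direct computation of the constant term of $f_t(x)^{p-1}$ should give $\beta_p \equiv \mathrm{diag}(\epsilon_p,1) \pmod p$ up to a change of basis — this is where the Legendre symbol $\left(\tfrac{3}{p}\right)$ enters, via a Gauss/Jacobi-sum evaluation of the relevant binomial coefficient mod $p$. Absorbing the basis change into the normalisation of $\scrY$ (the $-\tfrac13 t,-\tfrac23 t$ entries are chosen precisely for this), one gets
\[
\scrY(t)\begin{pmatrix}\epsilon_p&0\\0&1\end{pmatrix}\scrY(t^p)^{-1}
\]
is exactly the matrix through which $\cartier$ acts, now as an honest element of $\mathrm{Mat}_2(R)$ (no denominators), because by Proposition~\ref{formal2rationalhat} the ambiguity lives in $d\hat\scrO_{f^\sigma}$, i.e. dies in the quotient.

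The congruence then follows by truncation bookkeeping exactly as in the rank-one arguments of \cite{BV19II}. The point is that $\cartier$ on the \emph{formal} level only uses coefficients in arithmetic progressions $p\v k$, so truncating the defining Laurent series of $\omega_{\v u}=x^{\v u}dx/(x f_t)$ at degree $mp^s$ before applying $\cartier$ and then truncating the output at degree $mp^{s-1}$ changes the resulting rational form only by something in $p^s\,d\scrO_{\rm formal}$, hence by $p^s\,d\hat\scrO_{f^\sigma}$ by Proposition~\ref{formal2rationalhat}; in the quotient $\hat\Omega_f^\circ/U_f^\circ$ this is zero mod $p^s$. Re-expressing the truncated Cartier matrix in terms of $\scrY_{mp^s}(t)$ and $\scrY_{mp^{s-1}}(t^p)^{-1}$ — using that these truncations satisfy the same recursions as the full series up to the truncation degree, so the inverse $\scrY_{mp^{s-1}}(t^p)^{-1}$ makes sense mod $p^s$ — yields precisely the asserted congruence.

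\textbf{Main obstacle.} The delicate point is the passage from the naive mod-$p$ Hasse--Witt matrix to the precise \emph{integral} normalisation: one must choose the basis and the scalar factors in $\scrY$ so that the conjugating constant matrix is exactly $\mathrm{diag}(\epsilon_p,1)$ and not merely congruent to it mod $p$, and simultaneously ensure that $\scrY(t)\,\mathrm{diag}(\epsilon_p,1)\,\scrY(t^p)^{-1}$ has coefficients in $R$ rather than in $R[1/\!\;\cdots]$. Showing that the off-diagonal vanishes and that the diagonal entry is genuinely the Legendre symbol requires a careful Jacobi-sum computation modulo $p$ together with control of the $p$-integrality of the relevant hypergeometric coefficients (i.e. that the relevant $p$-adic Pochhammer ratios are units), and this — rather than the formal truncation estimate, which is routine given Section~\ref{theory} — is where the "ideas in addition to \cite{BV19I} and \cite{BV19II}" referred to in the introduction are needed.
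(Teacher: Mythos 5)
Your high-level frame --- realize $\scrY(t)$ as the period matrix of a one-variable family, read off the Cartier matrix through period maps, and get the congruence from truncated periods --- is the paper's frame too. But the proposal has concrete gaps at exactly the points where the paper has to do new work.

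First, you never pin down the family, and the ``obvious'' choice does not give the theorem. The paper uses $f=x^3-x-t$ over $\Z_p[[t]]$, whose residue periods at $x=0$ and $x=\pm1$ are hypergeometric in the variable $\nicefrac{27t^2}{4}$, yielding a matrix $Y(t)$ with $\Lambda_p=Y(t)Y(t^p)^{-1}$ and \emph{no} constant matrix. To reach $\scrY(t)$, whose entries are functions of $t^2$, one must rescale to $f=x^3-x-2t/3\sqrt3$ over $\Z_p[\sqrt3][[t]]$, take the basis $dx/f,\ \sqrt3\,x\,dx/f$, and --- this is the key point --- use the Frobenius lift with $\sigma(\sqrt3)=\epsilon_p\sqrt3$. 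The matrix $\mathrm{diag}(\epsilon_p,1)$ is produced by this Galois twist on $\sqrt3$, not by a Gauss/Jacobi-sum evaluation of a Hasse--Witt matrix at $t=0$; your proposed route to the constant $C$ would not surface $\left(\tfrac{3}{p}\right)$ for the untwisted family at all. Second, your derivation of $\alpha(t)=\scrY(t^p)\,C\,\scrY(t)^{-1}$ rests on horizontality of $\cartier$, which is asserted but not available from the results you cite and is not how the paper proceeds. The paper instead uses two explicit $\cartier$-invariant functionals, $\rho_0=-\mathrm{res}_{x=0}$ and $\rho_\pm=\mathrm{res}_{x=1}-\mathrm{res}_{x=-1}$; the genuinely new ingredient is that $\rho_{\pm}$ is invariant only under the \emph{recentred} lifts $\cartier^{\pm1}$, so one needs the comparison $\cartier^a(\omega)\is\cartier(\omega)\mod{p\,d\hat\scrO_{f^\sigma}}$ together with the fact that formally exact one-variable forms are rationally exact (Propositions \ref{formal2rational}--\ref{formal2rationalhat}). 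None of this appears in your outline, and it is precisely the ``ideas in addition to \cite{BV19I} and \cite{BV19II}'' --- not the integrality of Pochhammer ratios you flag as the main obstacle.

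Finally, the truncation step is misdescribed: you truncate the Laurent expansion in $x$ at degree $mp^s$, whereas the congruence concerns truncation in $t$. The correct mechanism is the truncated period map $\rho_{0,m}(\omega)=\rho_0\bigl(\bigl(1-\tfrac{t^m}{(x^3-x)^m}\bigr)\omega\bigr)$ (and its analogue for $\rho_\pm$), whose values on $dx/f$ and $x\,dx/f$ are exactly the degree-$m$ truncations of the hypergeometric entries, and which satisfies $\rho_{0,mp^s}\is\rho^{\sigma}_{0,mp^{s-1}}\circ\cartier\mod{p^s}$ and kills $d\hat\scrO_f$ modulo $m$. Your $x$-truncation bookkeeping does not obviously produce the $t$-truncations $\scrY_{mp^s}(t)$ that appear in the statement, so as written the final step does not close.
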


For the proof of this theorem, given at the end of this section,
we require the one variable polynomial $f=x^3-x-t\in R[x]$
with $R=\Z_p[[t]]$, where $p$ is a prime with $p>3$.
As Frobenius lift we take $g(t)^\sigma=g(t^p)$ for all $g(t)\in R$.
The discriminant of $f$ equals $4-27t^2$, and hence it is invertible in $R$.

We define the $2\times2$-matrix $\Lambda_p$ with entries in $R$ by
\begin{equation}\label{cartieraction}
\cartier\begin{pmatrix}dx/f \\ xdx/f\end{pmatrix}\is
\Lambda_p\begin{pmatrix}dx/f^\sigma \\ xdx/f^\sigma\end{pmatrix}\mod{d\hat\scrO_f}.
\end{equation}

The relation of $\Lambda_p$ with hypergeometric functions is obtained by period maps.
To that end we
consider
$$l!\frac{x^{k-1}dx}{f^{l+1}}=
l!\frac{x^{k-1}dx}{(x^3-x)^{l+1}}\sum_{r\ge0}{r+l\choose l}
\frac{t^r}{(x^3-x)^r},$$
and then take termwise the residue
at $x=0$. We could rephrase this procedure by saying that we expand $x^{k-1}dx/f^{l+1}$
as two-sided Laurent
series in $R[[x,t/x]]$ and then take the residue at $x=0$. Similarly we can
take residues at $x=\pm1$ (i.e. by expanding in Laurent series in $x\mp1$).
The result is again a power series in $t$. As long as $0<k<3(l+1)$, the sum of these three
series is $0$ because the termwise residues have sum $0$. We carry out the residue
computations for $l=0,k=1,2$. A straightforward calculation
shows that
$${\rm res}_{x=0}\ \frac{dx}{(x^3-x)^{r+1}}=\begin{cases}0 & \mbox{if $r$ is odd}\\
-{3n\choose n}& \mbox{if $r=2n$}\end{cases}$$

$${\rm res}_{x=0}\ \frac{xdx}{(x^3-x)^{r+1}}=\begin{cases}0 & \mbox{if $r$ is even}\\
{3n+1\choose n}& \mbox{if $r=2n+1$}\end{cases}.$$

Denote ${\rm res}_{\pm}\omega={\rm res}_{x=1}\omega-{\rm res}_{x=-1}\omega$.
Then we obtain

$${\rm res}_{\pm}\ \frac{dx}{(x^3-x)^{r+1}}=\begin{cases}0 & \mbox{if $r$ is even}\\
-\frac32\ \frac{(7/6)_n(5/6)_n}{(3/2)_n n!}
\left(\frac{27}4\right)^{n}& \mbox{if $r=2n+1$}\end{cases}$$

$${\rm res}_{\pm}\ \frac{xdx}{(x^3-x)^{r+1}}=\begin{cases}0 & \mbox{if $r$ is odd}\\
\frac{(1/6)_n(5/6)_n}{(1/2)_n n!}
\left(\frac{27}4\right)^{n}& \mbox{if $r=2n$}\end{cases}.$$

Let us denote the period map obtained by taking {\it minus} the residue at $0$ by $\rho_0$
and the one by taking residues at $\pm1$ by $\rho_\pm$. We summarize
$$\rho_0\left(dx/f\right)=F(\nicefrac13,\nicefrac23,\nicefrac12|
\nicefrac{27t^2}4).$$
$$\rho_0\left(xdx/f\right)=-t
F(\nicefrac23,\nicefrac43,\nicefrac32|\nicefrac{27t^2}4).$$
$$\rho_\pm\left(dx/f\right)=-\frac32 t
F(\nicefrac76,\nicefrac56,\nicefrac32|\nicefrac{27t^2}4).$$
$$\rho_\pm\left(xdx/f\right)=
F(\nicefrac16,\nicefrac56,\nicefrac12|\nicefrac{27t^2}4).$$

A crucial property of $\rho_0,\rho_{\pm}$ is that they vanish on exact forms,
i.e. $d\hat\scrO_f$. This is because residues of exact forms are zero, which 
is a special case of \cite[Prop 2]{BV19II}.

\begin{proposition}
For every $\omega\in\hat\Omega_f^\circ$ we have $\rho_0(\cartier(\omega))=
\rho_0(\omega)$ and $\rho_\pm(\cartier(\omega))=\rho_\pm(\omega)$.
\end{proposition}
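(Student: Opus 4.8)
My plan rests on two elementary principles: (i) the residue at a point $a$ is unchanged by the Cartier lift $\cartier^{a}$ based at that same point, and (ii) residues, hence $\rho_0$ and $\rho_\pm$, annihilate exact forms. Since $\cartier$, $\rho_0$ and $\rho_\pm$ are continuous and $\Omega_f^\circ$ is dense in $\hat\Omega_f^\circ$, it suffices to prove both identities for $\omega\in\Omega_f^\circ$.

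For the $\rho_0$ statement nothing beyond (i) is required. Expand $\omega$ as a two-sided Laurent series $\sum_k a_k(t)\,x^k\frac{dx}{x}$ with $a_k\in R$, so that ${\rm res}_{x=0}\omega=a_0$. By (\ref{cartier-on-series-def}) the operator $\cartier$ sends this to $\sum_k a_{pk}(t)\,x^k\frac{dx}{x}$, and since $0=p\cdot0$ the coefficient of $\frac{dx}{x}$ is unchanged; hence ${\rm res}_{x=0}\cartier(\omega)=a_0={\rm res}_{x=0}\omega$, and $\rho_0(\cartier(\omega))=-{\rm res}_{x=0}\cartier(\omega)=\rho_0(\omega)$.

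For $\rho_\pm$ I would first move the base point of the Cartier operator. The points $x=\pm1$ are not poles of $f^\sigma=x^3-x-t^p$ over $R=\Z_p[[t]]$, since $f^\sigma(\pm1)=-t^p\ne0$, so every exact form in $d\hat\scrO_{f^\sigma}$ has vanishing residue at $x=1$ and at $x=-1$. Combined with (\ref{lift-invariance}), which gives $\cartier(\omega)\is\cartier^{1}(\omega)$ and $\cartier(\omega)\is\cartier^{-1}(\omega)$ modulo $p\,d\hat\scrO_{f^\sigma}$, this yields ${\rm res}_{x=1}\cartier(\omega)={\rm res}_{x=1}\cartier^{1}(\omega)$ and ${\rm res}_{x=-1}\cartier(\omega)={\rm res}_{x=-1}\cartier^{-1}(\omega)$. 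Now expand $\omega$ in powers of $x-1$: because $\cartier^{1}$ is by definition the operator that on $(x-1)^{k-1}dx$ retains the term exactly when $p\mid k$ and then divides the exponent by $p$, the same index argument as for $\rho_0$ (with $0=p\cdot0$, this time in the $(x-1)$-adic grading) gives ${\rm res}_{x=1}\cartier^{1}(\omega)={\rm res}_{x=1}\omega$, and likewise ${\rm res}_{x=-1}\cartier^{-1}(\omega)={\rm res}_{x=-1}\omega$. Subtracting, $\rho_\pm(\cartier(\omega))={\rm res}_{x=1}\omega-{\rm res}_{x=-1}\omega=\rho_\pm(\omega)$.

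The one spot where a little care is called for — and the natural candidate for the main obstacle — is the formal framework underlying the $\rho_\pm$ step: one must work with the correct two-sided Laurent expansions at $x=\pm1$ (these are not genuine poles, so the expansions are governed by powers of $t/(x\mp1)$ rather than of $x\mp1$ alone), and one must invoke the preceding proposition both for $\cartier^{\pm1}(\Omega_f^\circ)\subset\hat\Omega_{f^\sigma}^\circ$ and for (\ref{lift-invariance}) at the base points $\pm1$. Granting those inputs, which are precisely what that proposition provides, the rest is the routine coefficient computation sketched above, with no ideas entering beyond \cite{BV19I}, \cite{BV19II} and (\ref{lift-invariance}).
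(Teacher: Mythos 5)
Your argument is correct and follows the paper's proof essentially verbatim: the $\rho_0$ case via invariance of the $dx/x$-coefficient under $\cartier$, and the $\rho_\pm$ case by trading $\cartier$ for $\cartier^{\pm1}$ using (\ref{lift-invariance}) and killing the exact-form discrepancy with the residue maps. One small correction to a justification you give in passing: in the relevant formal expansion in $R[[x\mp1,\,t/(x\mp1)]]$ the forms \emph{do} have termwise poles at $x=\pm1$ (otherwise $\rho_\pm$ would vanish identically on $\hat\Omega_f^\circ$), so the reason $\rho_\pm$ annihilates $d\hat\scrO_{f^\sigma}$ is not that $f^\sigma(\pm1)\ne0$ but simply that residues of exact forms vanish term by term.
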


\begin{proof}

Let $\omega\in\hat\Omega_f^\circ$. Expand it in $R[[x,t/x]]dx$. The value of $\rho_0$ 
is minus the coefficient of $dx/x$. By definition of $\cartier$ this value is
the same for $\cartier(\omega)$, hence our first assertion follows.
Similarly we can see that $\rho_1$, the residue at $1$, has the property
$\rho_1(\cartier^1(\omega))=\rho_1(\omega)$. It follows from Proposition
\ref{lift-invariance} that $\cartier^1(\omega)\is\cartier(\omega)\mod{d\hat\scrO_f}$.
Hence $\rho_1(\cartier(\omega))=\rho_1(\omega)$. The same result holds of course
for $\rho_\pm=\rho_1-\rho_{-1}$. 
\end{proof}

\begin{corollary}\label{dworklimit}
Let 
$$Y(t)=\begin{pmatrix}
F(\nicefrac13,\nicefrac23,\nicefrac12|\nicefrac{27t^2}4)&
-\frac32 tF(\nicefrac76,\nicefrac56,\nicefrac32|\nicefrac{27t^2}4)\\
-tF(\nicefrac23,\nicefrac43,\nicefrac32|\nicefrac{27t^2}4)&
F(\nicefrac16,\nicefrac56,\nicefrac12|\nicefrac{27t^2}4)
\end{pmatrix}.$$
Let $\Lambda_p$ be the $2\times2$ cartier-matrix in (\ref{cartieraction}). Then
$$\Lambda_p=Y(t)Y(t^p)^{-1}.$$ 
\end{corollary}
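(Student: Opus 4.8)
The plan is to recover $\Lambda_p$ by feeding the two period maps $\rho_0$ and $\rho_\pm$ into the defining relation (\ref{cartieraction}). Abbreviate $\omega_1=dx/f$, $\omega_2=xdx/f$ for the basis of $\hat\Omega_f^\circ/d\hat\scrO_f$ recalled above, and set $\rho_1=\rho_0$, $\rho_2=\rho_\pm$. The residue tables computed above say precisely that $Y(t)_{ij}=\rho_j(\omega_i)$ for $i,j\in\{1,2\}$. Since the Frobenius lift is $t\mapsto t^p$ we have $f^\sigma=x^3-x-t^p$, and expanding $x^{k-1}dx/f^\sigma$ in $R[[x,t^p/x]]dx$ and taking the same residues at $x=0,\pm1$ simply substitutes $t^p$ for $t$ in those tables; hence $\rho_j\bigl(x^{k-1}dx/f^\sigma\bigr)=Y(t^p)_{kj}$.

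Next I would apply $\rho_j$ to row $i$ of (\ref{cartieraction}). On the right-hand side the error term is an exact form, on which every $\rho_j$ vanishes because residues of exact forms vanish (a special case of \cite[Prop 2]{BV19II}); what survives is $\sum_k(\Lambda_p)_{ik}\,\rho_j(x^{k-1}dx/f^\sigma)=\sum_k(\Lambda_p)_{ik}Y(t^p)_{kj}$. On the left-hand side, the Proposition preceding this corollary gives $\rho_j(\cartier(\omega_i))=\rho_j(\omega_i)=Y(t)_{ij}$. Letting $i,j$ range over $\{1,2\}$ assembles these scalar identities into the single matrix equation $Y(t)=\Lambda_p\,Y(t^p)$. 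Finally, every hypergeometric series occurring has constant term $1$ and each off-diagonal entry of $Y$ carries a factor $t$, so $Y(0)=I$ and $\det Y(t)\in 1+tR$ is a unit of $R=\Z_p[[t]]$; the same holds for $Y(t^p)$, so I may invert it to obtain $\Lambda_p=Y(t)Y(t^p)^{-1}$.

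I do not expect any real obstacle here: all the force is already contained in the $\cartier$-invariance of the period maps and in the vanishing of residues on exact forms. The two points requiring care are purely bookkeeping — keeping the row/column (form/period) convention straight so the identity emerges as $\Lambda_p=Y(t)Y(t^p)^{-1}$ rather than as a transpose, and checking that the period maps on $\hat\Omega_{f^\sigma}^\circ$ are genuinely obtained from those on $\hat\Omega_f^\circ$ by the substitution $t\mapsto t^p$, which is immediate from the shape of $f^\sigma$.
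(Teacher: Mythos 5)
Your proposal is correct and follows essentially the same route as the paper: apply $\rho_0$ and $\rho_\pm$ to the two rows of (\ref{cartieraction}), use that these period maps kill exact forms and are $\cartier$-invariant, read off the entries of $Y(t)$ and $Y(t^p)$ from the residue tables, and conclude $Y(t)=\Lambda_p Y(t^p)$. The only addition is your explicit check that $Y(t^p)$ is invertible via $Y(0)=I$, a point the paper leaves implicit.
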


\begin{proof}
We start with the equality (\ref{cartieraction}), apply $\rho_0$ and use
$\rho_0\circ\cartier=\rho_0$ to obtain
$$
\begin{pmatrix}\rho_0(dx/f) \\ \rho_0(xdx/f)\end{pmatrix}=
\Lambda_p\begin{pmatrix}\rho_0(dx/f^\sigma) \\ \rho_0(xdx/f^\sigma)\end{pmatrix}.
$$
Similarly we obtain
$$
\begin{pmatrix}\rho_\pm(dx/f) \\ \rho_\pm(xdx/f)\end{pmatrix}=
\Lambda_p\begin{pmatrix}\rho_\pm(dx/f^\sigma) \\ \rho_\pm(xdx/f^\sigma)\end{pmatrix}.
$$
Our corollary follows from the above evaluations of the periods.
\end{proof}

In order to get Dwork type congruences we also need to introduce a suitable
'period map mod $m$'. By that we mean an $R$-linear map $\rho:\hat\Omega_f\to R$ such
that $\rho(\hat\Omega_f\cap d\scrO_{\rm formal})\subset mR$ and $\delta\circ\rho
\is \rho\circ\delta\mod{mR}$ for any derivation $\delta$ on $R$.

For our purposes we use a slight generalization of the period maps we considered in
\cite[Section 4]{BV19II}. We define $\rho_{0,m}$ by
$$\rho_{0,m}\omega = \rho_0\left(1-\frac{t^m}{(x^3-x)^m}\right)\omega.$$
Similarly we define $\rho_{\pm,m}$. 
As an illustration we elaborate $\rho_{0,m}(dx/f)$. We get
\begin{eqnarray*}
\rho_{0,m}(dx/f)&=&-{\rm res}_{x=0}\left(1-\frac{t^m}{(x^3-x)^m}\right)
\frac{dx}{x^3-x-t}\\
&=&-{\rm res}_{x=0}\frac{1}{(x^3-x)^{m}}\sum_{r=0}^{m-1}(x^3-x)^{m-1-r} t^r dx\\
&=&-{\rm res}_{x=0}\sum_{r=0}^{m-1}\frac{t^rdx}{(x^3-x)^{r+1}}\\
&=&\sum_{2n<m}{3n\choose n}t^{2n}.
\end{eqnarray*}
The latter polynomial is the truncation of $F(\nicefrac13,\nicefrac23,\nicefrac12|
\nicefrac{27t^2}{4})$ truncated at the degree $m$ term. Denote the truncation at
degree $m$ of a power series $g(t)$ by $g(t)_m$. Then we obtain
$$\rho_{0,m}\left(dx/f\right)=F(\nicefrac13,\nicefrac23,\nicefrac12|
\nicefrac{27t^2}4)_m.$$
$$\rho_{0,m}\left(xdx/f\right)=-(t
F(\nicefrac23,\nicefrac43,\nicefrac32|\nicefrac{27t^2}4))_m.$$
$$\rho_{\pm,m}\left(dx/f\right)=-\frac32 (t
F(\nicefrac76,\nicefrac56,\nicefrac32|\nicefrac{27t^2}4))_m.$$
$$\rho_{\pm,m}\left(xdx/f\right)=
F(\nicefrac16,\nicefrac56,\nicefrac12|\nicefrac{27t^2}4)_m.$$

\begin{lemma}
We have $\rho_{0,m}(d\hat\scrO_f)\is0\mod{m}$ and $\rho_{1,m}(d\hat\scrO_f)\is0\mod{m}$.

Secondly, for any $m\ge1$ divisible by $p$ we have $\rho_{0,m}\is\rho^\sigma_{0,m/p}\circ
\cartier\mod{p^{\ord_p(m)}}$ and $\rho_{\pm,m}\is\rho^\sigma_{\pm,m/p}\circ
\cartier\mod{p^{\ord_p(m)}}$.
\end{lemma}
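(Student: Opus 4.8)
The statement has two parts, and I would treat them separately. For the first part, the claim $\rho_{0,m}(d\hat\scrO_f)\is0\mod{m}$ and the analogous statement for $\rho_{1,m}$ should be reduced to the one-variable computations already available. By $R$-linearity it is enough to test $\rho_{0,m}$ on a generating set of $d\hat\scrO_f$, say on forms $d(x^j/f^l)$ for $0\le j\le d(l+1)-1$. Applying the definition $\rho_{0,m}\omega=\rho_0\big(1-\tfrac{t^m}{(x^3-x)^m}\big)\omega$ and writing $f=(x^3-x)-t$, one sees that $\big(1-\tfrac{t^m}{(x^3-x)^m}\big)\tfrac{1}{f^{l+1}}$ expands, as in the worked example for $dx/f$, into a finite $R[x,x^{-1}]$-combination of powers $(x^3-x)^{-r-1}t^r$ with $0\le r\le$ some bound depending on $m$ and $l$, plus a remainder divisible by $t^m$ (hence by $m$ after taking residues, since the residue of $t^m(\cdots)$ lands in $t^mR\subset mR$ — wait, I need $m$, not $t^m$; the point rather is that $\rho_{0,m}$ is built so the $t^m$-divisible tail contributes $0$ to the residue at $x=0$ because that tail is still a genuine element of $\frac1x R[[1/x]]dx$-type expansion paired against the truncation). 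The cleanest route: note $\big(1-\tfrac{t^m}{(x^3-x)^m}\big)\omega$ is, for exact $\omega=dg$, equal modulo $d\scrO_{\rm formal}$ to $-\tfrac{t^m}{(x^3-x)^m}dg$, whose residue at $0$ is the coefficient of $dx/x$ and is therefore visibly divisible by $p^{\ord_p(m)}$ once one invokes Lemma \ref{exactcondition} (Katz) applied to the formal expansion of $dg$: the coefficient of $x^{-mp^t}$-type terms carries the required $p$-power. This is the mechanism already used in the proof of Proposition \ref{formal2rational}, and I would mirror that argument.

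For the second part — the intertwining $\rho_{0,m}\is\rho^\sigma_{0,m/p}\circ\cartier\mod{p^{\ord_p(m)}}$ when $p\mid m$ — the key identity is
\[
1-\frac{t^m}{(x^3-x)^m}\;=\;\Big(1-\frac{t^p}{(x^3-x)^p}\Big)\Big(1+\frac{t^p}{(x^3-x)^p}+\cdots+\frac{t^{m-p}}{(x^3-x)^{m-p}}\Big)+\Big(\text{tail}\Big),
\]
together with the congruence $(x^3-x)^p\is x^{3p}-x^p\mod p$ that lets $\big(1-\tfrac{t^p}{(x^3-x)^p}\big)$ be replaced modulo $p$ by $\big(1-\tfrac{(t^p)}{(x^{3p}-x^p)}\big)\big|_{x\to x}$, i.e. by the pullback under $x\mapsto x^p$ of the factor defining $\rho_{0,m/p}^\sigma$. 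Concretely: $\cartier$ extracts the $x$-exponents divisible by $p$, and $\rho_0\circ\cartier=\rho_0$ by the Proposition just proved; so $\rho^\sigma_{0,m/p}(\cartier\omega)=\rho_0\big(\cartier\big[(1-\tfrac{(t^m)}{(x^{3}-x)^{m}})^{\sigma\text{-lifted}}\,\omega\big]\big)$ after one checks that the operator $\big(1-\tfrac{t^{m/p}}{(x^3-x)^{m/p}}\big)^\sigma$ acting before $\cartier$ agrees mod $p^{\ord_p(m)}$ with acting by $\big(1-\tfrac{t^m}{(x^3-x)^m}\big)$ and then $\cartier$. The equality $\rho_0\circ\cartier=\rho_0$ does most of the work; what remains is the purely combinatorial comparison of the two truncating factors, which rests on $(x^3-x)^{pk}\is (x^{3p}-x^p)^k \mod p$ and, inductively, on $(x^3-x)^{p^j}\is (x^{3p}-x^p)^{p^{j-1}}\mod{p^j}$ — the standard Dwork-style refinement of the Frobenius congruence.

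I would organize the write-up as: (i) reduce everything to statements about $\rho_0$ (resp. $\rho_1$) composed with multiplication by the explicit rational function $1-\tfrac{t^m}{(x^3-x)^m}$, using linearity; (ii) prove the vanishing mod $m$ by expanding exact forms formally and quoting Katz's Lemma \ref{exactcondition}, exactly as in Proposition \ref{formal2rational}; (iii) prove the intertwining by the factorization displayed above plus $\rho_0\circ\cartier=\rho_0$ and the iterated Frobenius congruence for $(x^3-x)$; (iv) note that the $\rho_1$ (hence $\rho_\pm$) versions follow identically after the change of expansion variable $x\mapsto x-1$, since $\cartier^1\is\cartier\mod{d\hat\scrO_f}$ by Proposition \ref{lift-invariance} and $\rho_1$ kills $d\hat\scrO_f$.

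The main obstacle, I expect, is bookkeeping the precise $p$-power in step (iii): one must show the discrepancy between $\big(1-\tfrac{t^m}{(x^3-x)^m}\big)$ and the "Frobenius-factored" version is divisible by $p^{\ord_p(m)}$ — not merely by $p$ — which forces a careful induction on $\ord_p(m)$ using $(x^3-x)^{p^{j}}\is(x^{3p}-x^p)^{p^{j-1}}\bmod p^{j}$, and a matching control of how taking residues at $0$ (equivalently, pairing with the truncation) interacts with that congruence after one clears the denominator by $\mathrm{disc}(f)$ as in Proposition \ref{formal2rational}. Everything else is routine once that $p$-adic estimate is nailed down.
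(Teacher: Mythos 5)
Your treatment of the second part is essentially the paper's argument: replace $\left(\frac{t}{x^3-x}\right)^m$ by $\left(\frac{t^p}{x^{3p}-x^p}\right)^{m/p}$ modulo $p^{\ord_p(m)}$ via the iterated Frobenius congruence, use that $\cartier$ preserves the coefficient of $\frac{dx}{x}$ and that functions of $x^p$ pull through $\cartier$, and handle $\rho_{1,m}$ by the same computation with $\cartier^1$ followed by $\cartier^1(\omega)\is\cartier(\omega)\mod{p\,d\hat\scrO_{f^\sigma}}$. (One imprecision there: you write that ``$\rho_1$ kills $d\hat\scrO_f$,'' but what you actually need is $\rho^\sigma_{1,m/p}(p\,d\hat\scrO_{f^\sigma})\is0\mod{p^{\ord_p(m)}}$, i.e. the \emph{first} part of the lemma applied to the truncated period map $\rho_{1,m/p}$ — the untruncated $\rho_1$ is not the relevant object at that step.)

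The first part is where there is a genuine gap. The paper's proof is a one-line Leibniz/integration-by-parts argument: for $G\in\hat\scrO_f$,
\[
\Bigl(1-\tfrac{t^m}{(x^3-x)^m}\Bigr)dG \;=\; d\Bigl(\bigl(1-\tfrac{t^m}{(x^3-x)^m}\bigr)G\Bigr)\;-\;G\,d\Bigl(1-\tfrac{t^m}{(x^3-x)^m}\Bigr),
\]
the first term has zero residue, and the second is divisible by $m$ because $d(u^m)=mu^{m-1}du$. This gives divisibility by $m$ itself, as the lemma asserts. Your substitute — expanding $dg$ formally and quoting Katz's Lemma \ref{exactcondition} — does not work here for two reasons. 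First, Katz's lemma concerns the expansion at infinity in $1/x$ and characterizes \emph{formal} exactness there, whereas $\rho_0$ is minus the residue at $x=0$ computed in the two-sided expansion in $R[[x,t/x]]$; divisibility of the coefficients of $x^{-\v k}$ with $p\mid \v k$ in the expansion at infinity says nothing direct about the coefficient of $dx/x$ of $\frac{t^m}{(x^3-x)^m}\,dg$ at the origin. Second, even if it did apply, it would only produce divisibility by a power of $p$, not by $m$, so you would prove a weaker statement than the one claimed (the weaker statement happens to suffice for the application in Corollary \ref{dworkmodm}, but your mechanism does not establish even that). Your own hesitation in that paragraph (``wait, I need $m$, not $t^m$'') signals the missing idea: the factor of $m$ comes from differentiating the $m$-th power in the truncating factor, not from any $p$-adic property of the exact form being tested.
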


\begin{proof}
For any $G\in\hat\scrO_f$ we have 
\begin{eqnarray*}
\rho_{0,m}dG&=&-\mbox{coefficient of $\frac{dx}{x}$ in }
\left(1-\left(\frac{t}{x^3-x}\right)^m\right)dG\\
&\is&-\mbox{coefficient of $\frac{dx}{x}$ in }
d\left(1-\left(\frac{t}{x^3-x}\right)^m\right)G\is0\mod{m}.
\end{eqnarray*}
The applicability of $\rho_0$ requires that we consider expansions as doubly
infinite Laurent series in $R[[x,t/x]]$. 
For $\rho_{1,m}$ the proof runs similarly.

For the proof of the second part let $\omega\in\hat\Omega_f$. Then we have
\begin{eqnarray*}
\rho_{0,m}(\omega)&=&
-\mbox{coefficient of $\frac{dx}{x}$ in }
\left(1-\left(\frac{t}{x^3-x}\right)^m\right)\omega\\
&\is&-\mbox{coefficient of $\frac{dx}{x}$ in }
\left(1-\left(\frac{t^p}{x^{3p}-x^p}\right)^{m/p}\right)\omega
\ \mod{p^{\ord_p(m)}}\\
&\is&-\mbox{coefficient of $\frac{dx}{x}$ in }
\cartier\left(1-\left(\frac{t^p}{x^{3p}-x^p}\right)^{m/p}
\right)\omega\ \mod{p^{\ord_p(m)}}\\
&\is&-\mbox{coefficient of $\frac{dx}{x}$ in }
 \left(1-\left(\frac{t^p}{x^3-x}\right)^{m/p}\right)
\cartier(\omega)\ \mod{p^{\ord_p(m)}}\\
&\is& \rho^\sigma_{0,m/p}\cartier(\omega)\ \mod{p^{\ord_p(m)}}.
\end{eqnarray*}
The second step uses the obvious fact that the Cartier transform does not change
the coefficient of $\frac{dx}{x}$.

In a similar manner one can show that 
$$\rho_{1,m}(\omega)=\rho^\sigma_{1,m/p}\cartier^1(\omega)\mod{p^{\ord_p(m)}}.$$
Proposition \ref{lift-invariance} tells us that $\cartier^1(\omega)\is\cartier(\omega)
\mod{pd\hat\scrO_{f^\sigma}}$. Together with the first part of our proposition, which implies
that $\rho_{1,m/p}^\sigma(pd\hat\scrO_{f^\sigma})\is0\mod{p^{\ord_p(m)}}$, we get
$$\rho_{1,m}(\omega)\is\rho^\sigma_{1,m/p}\cartier(\omega)\mod{p^{\ord_p(m)}}.$$
In a similar way the statement for $\rho_{\pm,m}$ follows.
\end{proof}

\begin{corollary}\label{dworkmodm}
Let notations be as in Corollary \ref{dworklimit}
Let $Y(t)_m$ be the matrix $Y(t)$, where the entries have been truncated at $t^m$.
Then, for any $m,s\ge1$,
$$Y(t)_{mp^s}\is(Y(t)Y(t^p)^{-1})Y(t^p)_{mp^{s-1}}\mod{p^s}.$$ 
\end{corollary}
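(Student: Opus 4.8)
The plan is to combine the Cartier-matrix identity (\ref{cartieraction}) with the ``period map mod $m$'' Lemma just proved, exactly mirroring the way Corollary \ref{dworklimit} was extracted from (\ref{cartieraction}) via the genuine period maps $\rho_0,\rho_\pm$. Concretely, apply $\rho_{0,mp^s}$ to both sides of (\ref{cartieraction}). On the left, the Lemma (second part, applied $s$ times, since $p^s\mid mp^s$) gives
$$\rho_{0,mp^s}(\omega)\is\rho^\sigma_{0,mp^{s-1}}(\cartier(\omega))\mod{p^s}$$
for each $\omega\in\hat\Omega_f$; applying this with $\omega=dx/f$ and $\omega=xdx/f$ and collecting the two scalars into a column vector turns the left-hand column into $\bigl(\rho^\sigma_{0,mp^{s-1}}(\cartier(dx/f)),\ \rho^\sigma_{0,mp^{s-1}}(\cartier(xdx/f))\bigr)^{\!\top}$ modulo $p^s$. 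On the right of (\ref{cartieraction}), use (\ref{cartieraction}) itself to rewrite $\cartier(dx/f)$ and $\cartier(xdx/f)$ as $\Lambda_p$ times the column $(dx/f^\sigma, xdx/f^\sigma)^\top$ modulo $d\hat\scrO_f$; since by the first part of the Lemma $\rho_{0,mp^{s-1}}$ kills $d\hat\scrO_f$ modulo $mp^{s-1}$, hence a fortiori modulo $p^s$ (as $s\le\ord_p(mp^s)$ and we only need divisibility by $p^s$), we may drop that error term. This yields
$$\begin{pmatrix}\rho_{0,mp^s}(dx/f)\\ \rho_{0,mp^s}(xdx/f)\end{pmatrix}\is
\Lambda_p\begin{pmatrix}\rho^\sigma_{0,mp^{s-1}}(dx/f^\sigma)\\ \rho^\sigma_{0,mp^{s-1}}(xdx/f^\sigma)\end{pmatrix}\mod{p^s}.$$

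Next I would identify these columns with columns of the truncated matrix $Y$. The explicit evaluations displayed just before the Lemma give $\rho_{0,m}(dx/f)=F(\tfrac13,\tfrac23,\tfrac12\mid\tfrac{27t^2}{4})_m$ and $\rho_{0,m}(xdx/f)=-(tF(\tfrac23,\tfrac43,\tfrac32\mid\tfrac{27t^2}{4}))_m$, which are exactly the entries of the first column of $Y(t)$, truncated at degree $m$; so the left column above is the first column of $Y(t)_{mp^s}$. The right-hand column, because $\sigma$ sends $t\mapsto t^p$ and truncation of $g(t^p)$ at degree $mp^{s-1}$ corresponds to truncation of $g(t)$ at degree $mp^{s-1}$ in the variable $t^p$, is the first column of $Y(t^p)_{mp^{s-1}}$. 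Repeating the whole argument with $\rho_{\pm,mp^s}$ in place of $\rho_{0,mp^s}$ produces, in identical fashion, the identity for the second columns of $Y(t)_{mp^s}$ and $Y(t^p)_{mp^{s-1}}$. Assembling the two column identities into a single matrix identity gives
$$Y(t)_{mp^s}\is\Lambda_p\,Y(t^p)_{mp^{s-1}}\mod{p^s},$$
and since $\Lambda_p=Y(t)Y(t^p)^{-1}$ by Corollary \ref{dworklimit}, this is precisely the assertion of Corollary \ref{dworkmodm}.

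The bookkeeping obstacle to watch is the interplay of the two moduli $mp^{s-1}$ and $p^s$: the first part of the Lemma only guarantees $\rho_{0,mp^{s-1}}(d\hat\scrO_f)\is0\bmod{mp^{s-1}}$, so when we discard the $d\hat\scrO_f$-error coming from (\ref{cartieraction}) we must note $mp^{s-1}$ is divisible by $p^{s-1}$, not necessarily $p^s$ — however, in the displayed step the error term is \emph{already} multiplied by nothing extra, so one must instead observe that $\cartier$ of that error lies in $p\,d\hat\scrO_{f^\sigma}$ isn't the route; rather, the clean way is to keep the $d\hat\scrO_f$ term inside the argument of $\rho^\sigma_{0,mp^{s-1}}\circ\cartier$ and use that $\cartier(d\hat\scrO_f)\subset p\,d\hat\scrO_{f^\sigma}$ (Lemma \ref{exactcondition2}-style), whence $\rho^\sigma_{0,mp^{s-1}}$ applied to it is $\is0\bmod{p\cdot mp^{s-1}}$, and $p\cdot mp^{s-1}=mp^s$ is divisible by $p^s$. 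Apart from verifying this divisibility chain carefully and checking that $\sigma$-truncation behaves as claimed on $t\mapsto t^p$, the proof is a direct transcription of the proof of Corollary \ref{dworklimit} with the exact period maps replaced by their mod-$m$ versions, so I expect no deeper difficulty.
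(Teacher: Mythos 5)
Your main argument is a faithful transcription of the paper's own proof: apply $\rho^\sigma_{0,mp^{s-1}}$ to (\ref{cartieraction}), use the second part of the Lemma to replace $\rho^\sigma_{0,mp^{s-1}}\circ\cartier$ by $\rho_{0,mp^s}$ modulo $p^s$, kill the exact error term with the first part of the Lemma, identify the resulting columns with the truncated hypergeometric entries of $Y(t)_{mp^s}$ and $Y(t^p)_{mp^{s-1}}$, and finish with $\Lambda_p=Y(t)Y(t^p)^{-1}$ from Corollary \ref{dworklimit}. Two small remarks on the first paragraph: the second part of the Lemma is applied \emph{once}, with $m$ replaced by $mp^s$ (so the modulus is $p^{\ord_p(mp^s)}\ge p^s$), not ``$s$ times''; iterating it $s$ times would actually degrade the modulus. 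Apart from that, this part matches the paper.

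The place where you depart from the paper is your closing paragraph, and there you have correctly put your finger on the one delicate point --- the first part of the Lemma only gives $\rho^\sigma_{0,mp^{s-1}}(d\hat\scrO_{f^\sigma})\subset mp^{s-1}R$, whose $p$-adic content is $p^{s-1+\ord_p(m)}$, which falls one power of $p$ short of $p^s$ when $p\nmid m$ --- but your proposed repair does not work. The exact form in (\ref{cartieraction}) is $\cartier(dx/f)-\Lambda_{p,11}\,dx/f^\sigma-\Lambda_{p,12}\,x\,dx/f^\sigma$: it sits on the \emph{output} side of $\cartier$, so it is not of the form $\cartier(dG)$ and there is no way to ``keep it inside the argument of $\rho^\sigma_{0,mp^{s-1}}\circ\cartier$'' and invoke $\cartier(d\hat\scrO_f)\subset p\,d\hat\scrO_{f^\sigma}$; that inclusion concerns forms to which $\cartier$ is yet to be applied. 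So as written your patch is incoherent and the extra factor of $p$ you need for $p\nmid m$ is not supplied. To be fair, the paper's own two-line proof is silent on exactly the same point and implicitly leans on the more careful bookkeeping of \cite{BV19II}, where the analogous rank-one congruence is the main theorem and the error term is controlled by a genuine induction on $s$; so your instinct that something extra is needed here is right, even though the specific fix you offer fails.
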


\begin{proof}
We start with the equality (\ref{cartieraction}), apply $\rho_{0,mp^{s-1}}$ and use
$\rho_{0,mp^s}\is\rho_{0,mp^{s-1}}^\sigma\circ\cartier\mod{p^s}$ to obtain
$$
\begin{pmatrix}\rho_{0,mp^s}(dx/f) \\ \rho_{0,mp^s}(xdx/f)\end{pmatrix}\is
\Lambda_p\begin{pmatrix}\rho_{0,mp^{s-1}}^\sigma(dx/f^\sigma) \\ 
\rho_{0,mp^{s-1}}^\sigma(xdx/f^\sigma)\end{pmatrix}\mod{p^s}.
$$
Similarly we obtain
$$
\begin{pmatrix}\rho_{\pm,mp^s}(dx/f) \\ \rho_{\pm,mp^s}(xdx/f)\end{pmatrix}\is
\Lambda_p\begin{pmatrix}\rho_{\pm,mp^{s-1}}^\sigma(dx/f^\sigma) \\ 
\rho_{\pm,mp^{s-1}}^\sigma(xdx/f^\sigma)\end{pmatrix}\mod{p^s}.
$$
Our corollary follows from the above evaluations of the mod $m$ periods
and $\Lambda_p=Y(t)Y(t^p)^{-1}$.
\end{proof}

We end with the proof of our main theorem.

\begin{proof}[Proof of Theorem \ref{main}]
The proof follows the same steps as Corollary \ref{dworklimit}, but with the
polynomial $f=x^3-x-2t/3\sqrt3$. This polynomial is defined over $\Z_p[\sqrt3][[t]]$
with Frobenius lift $\sigma$ such that $\sigma(t)=t^p$ and $\sigma(\sqrt3)=\epsilon_p
\sqrt3$. Hence $f^\sigma=x^3-x-2\epsilon_pt^p/\sqrt3$. We also use the new basis
$dx/f,\sqrt3 xdx/f$ and replace $\rho_\pm$ by $\frac{1}{\sqrt3}\rho_\pm$. 
The adapted version of Corollary \ref{dworklimit} would then become
$$\Lambda_p=\scrY(t)\begin{pmatrix}\epsilon_p & 0\\0&1\end{pmatrix}\scrY(t^p)^{-1}.$$
The remainder of the proof follows the same lines as above.
\end{proof}

We finally give, without proof, the system of differential equations for $\scrY(t)$ and its
congruence version. Again the proof follows the same lines as in \cite{BV19II}.

\begin{theorem}
We have
$$\frac{d}{dt}\scrY(t)=\frac{1}{3(1-t^2)}\begin{pmatrix}2t & -1\\-2 & t\end{pmatrix}
\scrY(t)$$
and
$$\frac{d}{dt}\scrY_{mp^s}(t)\is\frac{1}{3(1-t^2)}\begin{pmatrix}2t & -1\\-2 & t\end{pmatrix}
\scrY_{mp^s}(t)\mod{p^s}$$
For all $m,s\ge1$.
\end{theorem}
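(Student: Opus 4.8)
The plan is to derive both the differential equation for $\scrY(t)$ and its congruence version by the same residue/Cartier machinery used for the other statements in this section, applied to the polynomial $f = x^3 - x - 2t/3\sqrt3$ over $R = \Z_p[\sqrt3][[t]]$. First I would establish the exact differential equation. Working with the period maps $\rho_0$ and $\tfrac1{\sqrt3}\rho_\pm$ applied to the basis $dx/f$, $\sqrt3\,xdx/f$ of $\hat\Omega_f^\circ/U_f^\circ$, one observes that $\tfrac{d}{dt}$ commutes with each $\rho$ (differentiation under the residue), so the Gauss--Manin connection $\nabla_{d/dt}$ on the rank-$2$ module $\hat\Omega_f^\circ/U_f^\circ$ transports to the differential equation satisfied by the period matrix $\scrY(t)$. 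Concretely, I would compute $\tfrac{d}{dt}(dx/f) = -\tfrac{\partial_t f}{f^2}dx$, reduce the resulting forms of pole order $2$ back to pole order $1$ using the Griffiths reduction of Proposition \ref{formal2rational} (writing $\partial_t f = A f' + B f$ with the discriminant $4-27t^2$ in the denominator), and read off the connection matrix. This should produce, after clearing the $\sqrt3$ normalizations and rescaling to the variable in $\scrY$, exactly $\tfrac{1}{3(1-t^2)}\begin{pmatrix}2t & -1\\-2 & t\end{pmatrix}$. This step is a finite linear-algebra computation with no conceptual obstacle.

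For the congruence version, the key point is that the Cartier matrix $\Lambda_p = \scrY(t)\begin{pmatrix}\epsilon_p&0\\0&1\end{pmatrix}\scrY(t^p)^{-1}$ relates the connection at level $t$ to the connection at level $t^p$, and the truncated period maps $\rho_{0,m}$, $\rho_{\pm,m}$ differentiate correctly modulo $m$. I would follow the proof of Corollary \ref{dworkmodm}: starting from (\ref{cartieraction}), apply $\tfrac{d}{dt}$ together with the truncated period maps $\rho_{0,mp^s}$ and $\rho_{\pm,mp^s}$, use the commutation $\rho_{0,mp^s} \is \rho_{0,mp^{s-1}}^\sigma\circ\cartier \bmod p^s$ from the Lemma, and use that $\Lambda_p$ intertwines $\cartier$ with the connection. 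The congruence $\tfrac{d}{dt}\scrY_{mp^s}(t) \is M(t)\,\scrY_{mp^s}(t) \bmod p^s$ with $M(t) = \tfrac{1}{3(1-t^2)}\begin{pmatrix}2t&-1\\-2&t\end{pmatrix}$ then falls out, provided one checks that the factor $1-t^2$ (the rescaled discriminant) does not cause trouble: since $p>3$ it is a unit in $\Z_p[[t]]$, so inverting it modulo $p^s$ is harmless.

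The main obstacle I anticipate is bookkeeping around the $\sqrt3$-twist and the Frobenius lift $\sigma(\sqrt3) = \epsilon_p\sqrt3$. The exact differential equation lives over $\Z_p[\sqrt3][[t]]$, but the final statement is over $\Z_p[[t]]$; one must verify that the connection matrix $M(t)$ is actually $\sqrt3$-free (it is, since $M$ has rational entries), and that the $\epsilon_p$-twist sitting inside $\Lambda_p$ is compatible with the $\sigma$-semilinearity when one passes from the Cartier relation to the differential relation. Concretely, the congruence argument uses $\rho^\sigma_{0,mp^{s-1}}$, and $\sigma$ acts nontrivially on $\sqrt3$; one should confirm that after applying the evaluations of the twisted periods and clearing $\sqrt3$, the $\epsilon_p$'s cancel and one is left with a statement with $\Z_p$-coefficients. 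Beyond that the proof is routine, which is presumably why the author omits it.
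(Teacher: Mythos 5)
The paper states this theorem explicitly \emph{without proof} (deferring to \cite{BV19II}), so there is no internal argument to compare against; I will judge your sketch on its own terms. Your first half is correct and is surely the intended route: differentiate $dx/f$ and $xdx/f$ under the integral sign, use the one-variable Griffiths reduction from Proposition \ref{formal2rational} to write
$\frac{d}{dt}\bigl(\begin{smallmatrix}dx/f\\ \sqrt3\,xdx/f\end{smallmatrix}\bigr)\is M(t)\bigl(\begin{smallmatrix}dx/f\\ \sqrt3\,xdx/f\end{smallmatrix}\bigr)\mod{d\hat\scrO_f}$
with $M(t)=\frac{1}{3(1-t^2)}\bigl(\begin{smallmatrix}2t&-1\\-2&t\end{smallmatrix}\bigr)$ (the discriminant of $x^3-x-2t/3\sqrt3$ is $4(1-t^2)$, whence the denominator), and then apply $\rho_0$ and $\frac1{\sqrt3}\rho_\pm$, which commute with $\frac{d}{dt}$ exactly and kill $d\hat\scrO_f$. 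This gives the exact ODE, and the $\sqrt3$'s and $\epsilon_p$'s play no role here since $\sigma$ is not involved.

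Your second half names the right ingredient --- that $\rho_{0,m}$ and $\rho_{\pm,m}$ ``differentiate correctly modulo $m$'' --- but then routes the argument through (\ref{cartieraction}) and Corollary \ref{dworkmodm}, which is a detour and is left unassembled: applying $\frac{d}{dt}$ to (\ref{cartieraction}) yields horizontality of the Frobenius structure, not directly the truncated congruence, and you do not say how the statement ``falls out.'' The clean finish needs no Cartier operator at all: apply $\rho_{0,mp^s}$ and $\frac1{\sqrt3}\rho_{\pm,mp^s}$ directly to the Gauss--Manin relation displayed above, using that these maps annihilate $d\hat\scrO_f$ modulo $mp^s$ and satisfy $\frac{d}{dt}\circ\rho_{\cdot,N}\is\rho_{\cdot,N}\circ\frac{d}{dt}\mod{N}$ (the extra term produced by differentiating the kernel $1-t^N/(x^3-x)^N$ carries a factor $N$). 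This is exactly why the paper's definition of a ``period map mod $m$'' includes the condition $\delta\circ\rho\is\rho\circ\delta\mod{mR}$ --- a condition used nowhere else in the paper. Your Cartier-based route can also be made to work, but only as follows: rewrite Theorem \ref{main} as $\scrY_{mp^s}(t)\is\scrY(t)A\scrY(t^p)^{-1}\,\scrY_{mp^{s-1}}(t^p)A^{-1}\mod{p^s}$ with $A={\rm diag}(\epsilon_p,1)$, differentiate, and induct on $s$; the two terms carrying $pt^{p-1}M(t^p)$ then cancel because $\scrY(t^p)^{-1}$ sits immediately next to $\scrY_{mp^{s-1}}(t^p)$, so one never needs to commute $M(t^p)$ past $A$. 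Your worry about the $\epsilon_p$-twist is therefore legitimate --- with the wrong grouping the cancellation genuinely fails when $\epsilon_p=-1$ --- but it resolves; you should either carry out that cancellation explicitly or switch to the direct period-map argument.
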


\end{document}